\newcommand{\bbbc}{\mathbb{C}}
\newcommand{\bbbn}{\mathbb{N}}
\newcommand{\bbbr}{\mathbb{R}}
\newcommand{\Idx}{\mathcal{I}}
\newcommand{\Jdx}{\mathcal{J}}
\newcommand{\ctI}{\mathcal{T}_{\Idx}}
\newcommand{\lfI}{\mathcal{L}_{\Idx}}
\newcommand{\ctIl}[1]{\mathcal{T}_{\Idx}^{(#1)}}
\newcommand{\ctJ}{\mathcal{T}_{\Jdx}}
\newcommand{\ctIJ}{\mathcal{T}_{\Idx\times\Jdx}}
\newcommand{\ctII}{\mathcal{T}_{\Idx\times\Idx}}
\newcommand{\lfII}{\mathcal{L}_{\Idx\times\Idx}}
\newcommand{\lfiII}{\mathcal{L}_{\Idx\times\Idx}^-}
\newcommand{\lfaII}{\mathcal{L}_{\Idx\times\Idx}^+}
\newcommand{\dist}{\mathop{\operatorname{dist}}\nolimits}
\newcommand{\diam}{\mathop{\operatorname{diam}}\nolimits}
\newcommand{\sons}{\mathop{\operatorname{sons}}\nolimits}
\newcommand{\treeroot}{\mathop{\operatorname{root}}\nolimits}
\newcommand{\supp}{\mathop{\operatorname{supp}}\nolimits}
\newcommand{\level}{\mathop{\operatorname{level}}\nolimits}
\newcommand{\brow}{\mathop{\operatorname{row}}\nolimits}
\newcommand{\bcol}{\mathop{\operatorname{col}}\nolimits}
\newcommand{\desc}{\mathop{\operatorname{desc}}\nolimits}
\newcommand{\sd}[1]{\mathop{\operatorname{sd}}_{#1}\nolimits}
\newcommand{\descd}[1]{\mathop{\operatorname{dscd}}_{#1}\nolimits}
\newcommand{\Csb}{C_\text{sb}}
\newcommand{\Csn}{C_\text{sn}}
\newcommand{\Cbp}{C_\text{bp}}
\newcommand{\Cbb}{C_\text{bb}}
\newcommand{\Cov}{C_\text{ov}}
\newcommand{\Crs}{C_\text{rs}}
\newcommand{\Cun}{C_\text{un}}
\newcommand{\Csp}{C_\text{sp}}
\newcommand{\Clv}{C_\text{lv}}
\newcommand{\Ccb}{C_\text{cb}}
\newcommand{\Cnc}{C_\text{nc}}
\newcommand{\Clb}{C_\text{lb}}
\newcommand{\Cdh}{C_\text{dh}}
\newcommand{\Csvd}{C_\text{svd}}
\newcommand{\Cba}{C_\text{ba}}
\newtheorem{theorem}{Theorem}
\newtheorem{lemma}[theorem]{Lemma}
\newtheorem{definition}[theorem]{Definition}
\newtheorem{remark}[theorem]{Remark}
\newenvironment{proof}{\emph{Proof.}}{\hfill$\Box$}
\title{Directional $\mathcal{H}^2$-matrix compression for
       high-frequency problems}
\author{Steffen B\"orm}
\date{16th of September, 2015}
\begin{document}
\maketitle

\begin{abstract}
Standard numerical algorithms like the fast multipole method or
$\mathcal{H}$-matrix schemes rely on low-rank approximations of
the underlying kernel function.
For high-frequency problems, the ranks grow rapidly as the mesh
is refined, and standard techniques are no longer attractive.

\emph{Directional} compression techniques solve this problem
by using decompositions based on plane waves.
Taking advantage of hierarchical relations between these waves'
directions, an efficient approximation is obtained.

This paper is dedicated to \emph{directional $\mathcal{H}^2$-matrices}
that employ local low-rank approximations to handle directional
representations efficiently.

The key result is an algorithm that takes an arbitrary matrix and
finds a quasi-optimal approximation of this matrix as a directional
$\mathcal{H}^2$-matrix using a prescribed block tree.
The algorithm can reach any given accuracy, and the approximation
requires only $\mathcal{O}(n k + \kappa^2 k^2 \log n)$ units of
storage, where $n$ is the matrix dimension, $\kappa$ is the
wave number, and $k$ is the local rank.
In particular, we have a complexity of $\mathcal{O}(n k)$ if $\kappa$
is constant and $\mathcal{O}(n k^2 \log n)$ for high-frequency problems
characterized by $\kappa^2 \sim n$.

Since the algorithm can be applied to arbitrary matrices, it can serve
as the foundation of fast techniques for constructing preconditioners.
\end{abstract}

%
%
\section{Introduction}

We consider the Helmholtz single layer potential operator
\begin{equation*}
  \mathcal{G}[u](x) := \int_\Omega g(x,y) u(y) \,dy,
\end{equation*}
where $\Omega\subseteq\bbbr^3$ is a surface and
\begin{equation}\label{eq:helmholtz}
  g(x,y) = \frac{\exp(i \kappa \|x-y\|)}{4\pi \|x-y\|}
\end{equation}
denotes the Helmholtz kernel function with the
wave number $\kappa\in\bbbr_{\geq 0}$.

Applying a standard Galerkin discretization scheme with
a finite element basis $(\varphi_i)_{i\in\Idx}$ leads to the
stiffness matrix $G\in\bbbc^{\Idx\times\Idx}$ given by
\begin{align}\label{eq:matrix}
  g_{ij}
  &= \int_\Omega \varphi_i(x) \int_\Omega g(x,y) \varphi_j(y) \,dy\,dx &
  &\text{ for all } i,j\in\Idx,
\end{align}
where we assume that the basis functions are sufficiently smooth
to ensure that the integrals are well-defined even for $x=y$.
Due to $g(x,y)\neq 0$ for all $x\neq y$, the matrix $G$ is not
sparse and therefore requires special handling if we want to
construct an efficient algorithm.

Standard techniques like fast multipole expansions \cite{RO85,GRRO87},
panel clustering \cite{HANO89,SA00}, or hierarchical matrices
\cite{HA99,HAKH00,GRHA02} rely on local low-rank approximations
of the matrix.

In the case of the high-frequency Helmholtz equation, e.g., if
the product of the wave number $\kappa$ and the mesh width $h$
is relatively large, these techniques can no longer be applied
since the local ranks become too large.

The \emph{fast multipole method} can be generalized to handle this
problem by employing a special expansion that leads to operators
that can be diagonalized, and therefore evaluated efficiently
\cite{RO93,GRHUROWA98}.

The \emph{butterfly method} (also known as multi-level matrix
decomposition algorithms, MLMDA) \cite{BOMI96} achieves a similar
goal by using permutations and block-diagonal transformations in
a pattern closely related to the fast Fourier transformation
algorithm.

\emph{Directional methods} \cite{BR91,ENYI07,MESCDA12,BEKUVE15}
take advantage of the fact that the Helmholtz kernel (\ref{eq:helmholtz})
can be written as a product of a plane wave and a function that
is smooth inside a conical domain.
Replacing this smooth function by a suitable approximation results
in fast summation schemes.

We should also mention that there are specialized
methods for certain geometries:
modified Ewalt summation methods can be applied for periodic
geometries \cite{BEKUMO08},
while tensor-structured meshes have properties that allow us to
construct efficient algorithms even though standard
Chebyshev interpolation \cite{KH09} requires polynomials of a fairly
high degree.

We will focus on directional methods, since they can be applied
in a more general setting than the fast multipole expansions based
on special functions and since they offer the chance of achieving
better compression rates than the butterfly scheme.

In particular, we will work with \emph{directional $\mathcal{H}^2$-matrices}
(abbreviated $\mathcal{DH}^2$-matrices) introduced in \cite{BEKUVE15,BOME15},
the algebraic counterparts of the directional approximation schemes used
in \cite{BR91,ENYI07,MESCDA12}.
The article \cite{BEKUVE15} presents a cross approximation scheme
that can be used to construct a $\mathcal{DH}^2$-matrix approximation
based on a relatively small number of matrix coefficients.

In the present paper, we follow a different approach:
there is an algorithm \cite{BOHA02} that approximates an arbitrary
matrix by an $\mathcal{H}^2$-matrix, guaranteeing a prescribed accuracy.
Relying exclusively on orthogonal transformations, the algorithm
is very stable and has shown itself to be very reliable in practice.

Although the algorithm is originally formulated for dense matrices,
it can be easily adapted to compress $\mathcal{H}$- and
$\mathcal{H}^2$-matrices \cite{HAKHSA00,BO04}, to merge
$\mathcal{H}^2$-submatrices \cite{BO07a}, and to perform matrix
arithmetic operations like addition, multiplication and inversion
\cite{BORE14} to construct robust data-sparse preconditioners.

We will generalize this fundamental algorithm to construct
$\mathcal{DH}^2$-matrices, present rigorous error estimates,
outline an efficient error control strategy, and provide bounds for
the storage requirements and computational work.

In the original paper \cite{BEKUVE15} a directional approximation
is used only for large subdomains, while a standard $\mathcal{H}$-matrix
approximation is used for small subdomains.
Due to the structure of $\mathcal{H}$-matrices, this approach cannot
reach linear complexity even in the low-frequency case.
The rank $k$ depends on the desired accuracy $\epsilon$,
typically like $k\sim|\log(\epsilon)|^\alpha$ with a small exponent $\alpha$.

In the present paper, we generalize the more efficient $\mathcal{H}^2$-matrix
representations and obtain a complexity of
$\mathcal{O}(n k + \kappa^2 k^2 \log(n))$, i.e., we have
$\mathcal{O}(n k)$ for low-frequency problems and
$\mathcal{O}(n k^2 \log(n))$ in the case of high frequencies
where $\kappa h\sim 1$ holds for the mesh parameter $h\sim n^{-1/2}$.
It is possible to prove \cite{BOME15} that
a rank of $k\sim|\log(\epsilon)|^3$ is sufficient, independent of
$\kappa$ and $n$.
Not only does this unified approach improve the complexity estimates,
it also allows us to use the same algorithms across the entire
range of resolutions and frequencies and leads to ``cleaner''
implementations.

The paper is organized as follows:
Section~2 introduces $\mathcal{DH}^2$-matrices, using directional
interpolation methods \cite{MESCDA12} as an example motivating the
algebraic structure.

Section~3 presents admissibility conditions that allow us to find
submatrices that can be approximated by low rank.
By choosing the required directions in a separate step, we can keep
the algorithm quite close to standard techniques for $\mathcal{H}$-
and $\mathcal{H}^2$-matrices.

Section~4 outlines how the standard $\mathcal{H}^2$-matrix-vector
multiplication algorithm can be generalized to handle
directional approximations, in particular how forward and
backward transformations have to be modified.

Section~5 is devoted to the first main result, an estimate for
the storage requirements and the computational work of the
matrix-vector multiplication.
Compared to previous work, we use significantly weaker assumptions:
the clusters are not required to shrink at a fixed rate, the leaf
clusters are not required to be on the same level of the tree,
and we can handle volumes, surfaces and curves uniformly via
the ``curvature condition'' (\ref{eq:planar}).
The main result in this section is the complexity estimate
provided by Theorem~\ref{th:complexity}.

Section~6 introduces the fundamental compression algorithm
that can be used to approximate an arbitrary matrix by
an $\mathcal{DH}^2$-matrix guaranteeing a prescribed accuracy.
The main results in this section are Theorem~\ref{th:error_estimate},
that gives an estimate for the approximation error that depends only
on quantities we can control explicitly, and
Theorem~\ref{th:complexity_compression} that gives an estimate
for the computational work required by
the compression algorithm, which is close to optimal given the
amount of data that has to be processed.
Techniques like $\mathcal{H}$-matrix condensation
\cite[Chapter~6.5]{BO10} and weighted truncation \cite[Chapter~6.6]{BO10}
that have been established in the context of $\mathcal{H}^2$-matrix
methods to improve the efficiency can easily be extended to handle
$\mathcal{DH}^2$-matrices.

The final Section~7 contains a number of numerical experiments that
illustrate the potential of the $\mathcal{DH}^2$-matrix representation
and compare the new class of matrices with the
well-known ACA scheme.

%
%
\section{\texorpdfstring{Directional $\mathcal{H}^2$-matrices}
                        {Directional H2-matrices}}

Hierarchical matrix methods are based on decompositions of the
matrix $G$ into submatrices that can be approximated by factorized
low-rank matrices.
In order to find out how these submatrices and the factorization
have to be chosen, we consider the approximation scheme described
in \cite{MESCDA12} and translate the resulting compressed
representation into an algebraical definition that can be applied
in more general situations.

In order to describe the decomposition into submatrices, we first
introduce a hierarchy of subsets of the index set $\Idx$ corresponding
to the \emph{box trees} used, e.g., in fast multipole methods.

%
%
\begin{definition}[Cluster tree]
\label{de:cluster}
Let $\mathcal{T}$ be a labeled tree such that the label $\hat t$
of each node $t\in\mathcal{T}$ is a subset of the index set $\Idx$.
We call $\mathcal{T}$ a \emph{cluster tree} for $\Idx$ if
\begin{itemize}
  \item the root $r\in\mathcal{T}$ is assigned $\hat r=\Idx$,
  \item the index sets of siblings are disjoint, i.e.,
    \begin{align*}
      t_1\neq t_2 &\Longrightarrow \hat t_1\cap\hat t_2=\emptyset &
      &\text{ for all } t\in\mathcal{T},\ t_1,t_2\in\sons(t),
         \text{ and}
    \end{align*}
  \item the index sets of a cluster's sons are a partition of
    their father's index set, i.e.,
    \begin{align*}
      \hat t &= \bigcup_{t'\in\sons(t)} \hat t' &
      &\text{ for all } t\in\mathcal{T}
          \text{ with } \sons(t)\neq\emptyset.
    \end{align*}
\end{itemize}
A cluster tree for $\Idx$ is usually denoted by $\ctI$.
Its nodes are called \emph{clusters}, and
its root is denoted by $\treeroot(\ctI)$.
\end{definition}

A cluster tree $\ctI$ can be split into levels:
we let $\ctIl{0}$ be the set containing only the root of $\ctI$
and define
\begin{align*}
  \ctIl{\ell} &:= \{ t'\in\ctI\ :\ t'\in\sons(t) \text{ for a }
                       t\in\ctIl{\ell-1} \} &
  &\text{ for all } \ell\in\bbbn.
\end{align*}
For each cluster $t\in\ctI$, there is exactly one $\ell\in\bbbn_0$
such that $t\in\ctIl{\ell}$.
We call this the \emph{level number} of $t$ and denote it by $\level(t)=\ell$.
The maximal level
\begin{equation*}
  p_\Idx := \max\{ \level(t)\ :\ t\in\ctI \}
\end{equation*}
is called the \emph{depth} of the cluster tree.

Pairs of clusters $(t,s)$ correspond to subsets $\hat t\times\hat s$
of $\Idx\times\Idx$, i.e., to submatrices of $G\in\bbbc^{\Idx\times\Idx}$.
These pairs inherit the hierarchical structure provided by the
cluster tree.

%
%
\begin{definition}[Block tree]
\label{de:block}
Let $\mathcal{T}$ be a labeled tree, and let $\ctI$ and $\ctJ$ be
cluster trees for index sets $\Idx$ and $\Jdx$ with roots $r_\Idx$
and $r_\Jdx$.
We call $\mathcal{T}$ a \emph{block tree} for $\ctI$ and $\ctJ$ if
\begin{itemize}
  \item for each $b\in\mathcal{T}$ there are $t\in\ctI$, $s\in\ctJ$
        such that $b=(t,s)$,
  \item the root $r\in\mathcal{T}$ satisfies $r=(r_\Idx,r_\Jdx)$,
  \item the label of $b=(t,s)\in\mathcal{T}$ is given by
        $\hat b=\hat t\times\hat s$, and
  \item for each $b=(t,s)\in\mathcal{T}$ we have
    \begin{equation*}
      \sons(b)\neq\emptyset \Longrightarrow
      \sons(b) = \sons(t)\times\sons(s).
    \end{equation*}
\end{itemize}
A block tree for $\ctI$ and $\ctJ$ is usually denoted by $\ctIJ$.
Its nodes are called \emph{blocks}.
\end{definition}

In the following, we assume that a cluster tree $\ctI$ for the
index set $\Idx$ and a block tree $\ctII$ for $\ctI$ are given.

We have to identify submatrices, corresponding to blocks, that can
be approximated efficiently.
Considering a submatrix $G|_{\hat t\times\hat s}$ corresponding to
a block $b=(t,s)$, the standard approach is to use a tensor-product
approximation of the kernel function $g$ that is valid on the
supports of the basis functions $\varphi_i$ and $\varphi_j$ for
$i\in\hat t$ and $j\in\hat s$, respectively.
Since working directly with these supports is too expensive, we
introduce \emph{bounding boxes}, i.e., for each $t\in\ctI$
we construct an axis-parallel box $B_t$ such that
\begin{align*}
  \supp(\varphi_i) &\subseteq B_t &
  &\text{ for all } i\in\hat t.
\end{align*}
If we have a tensor-product approximation $\tilde g_{ts}$ of $g$ that
is sufficiently accurate on $B_t\times B_s$, we can use it to replace
$g$ in (\ref{eq:matrix}) and obtain a low-rank matrix.

Standard techniques for finding such local approximations of $g$
include fast multipole expansions \cite{RO85,GRRO87}, panel-clustering
techniques \cite{HANO89,SA00}, cross-approximation methods
\cite{BOGR04}, and recently quadrature formulas \cite{BOCH14}.

In order to handle the oscillatory nature of the Helmholtz kernel
(\ref{eq:helmholtz}), we focus on directional approximation methods
\cite{BR91,ENYI07,MESCDA12,BEKUVE15} that are based on the idea
that plane waves
\begin{equation*}
  x \mapsto \exp(i \kappa \langle x-y, c \rangle)
\end{equation*}
can be used to approximate the spherical waves appearing in the
kernel function.
Here $\kappa$ is again the wave number, while $c\in\bbbr^3$ is a
unit vector describing the direction in which the wave is traveling.

%
%
\begin{definition}[Hierarchical directions]
\label{de:directions}
Let $(\mathcal{D}_\ell)_{\ell=0}^\infty$ be a family of finite subsets
of $\bbbr^3$.
It is called a \emph{family of hierarchical directions} if
\begin{align*}
  \|c\|=1 &\vee c=0 &
  &\text{ for all } c\in\mathcal{D}_\ell,\ \ell\in\bbbn_0.
\end{align*}
Let $(\sd{\ell})_{\ell=0}^\infty$ be a family of mappings
$\sd{\ell}:\mathcal{D}_\ell\to\mathcal{D}_{\ell+1}$.
It is called a \emph{family of compatible son mappings} if
\begin{align*}
  \|c - \sd{\ell}(c)\| &\leq \|c-\tilde c\| &
  &\text{ for all } c\in\mathcal{D}_\ell,\ \tilde c\in\mathcal{D}_{\ell+1},
                     \ \ell\in\bbbn_0.
\end{align*}
Given a cluster tree $\ctI$, a family of hierarchical directions and
a family of compatible son mappings, we write
\begin{align*}
  \mathcal{D}_t &:= \mathcal{D}_{\level(t)}, &
  \sd{t}(c) &:= \sd{\level(t)}(c) &
  &\text{ for all } t\in\ctI,\ c\in\mathcal{D}_{\level(t)}.
\end{align*}
\end{definition}

Let $(\mathcal{D}_\ell)_{\ell=0}^\infty$  be a family of hierarchical
directions and $(\sd{\ell})_{\ell=0}^\infty$ a family of compatible son
mappings.

We consider the approximation of the submatrix $G|_{\hat t\times\hat s}$
corresponding to a block $b=(t,s)\in\ctII$.
By Definition~\ref{de:block}, we know that $t$ and $s$ are clusters
on the same level of $\ctI$, and Definition~\ref{de:directions}
implies $\mathcal{D}_t=\mathcal{D}_s$.

For a given direction $c\in\mathcal{D}_t=\mathcal{D}_s$, we find
\begin{equation}\label{eq:helmholtz_c}
  g(x,y) = \frac{\exp(i \kappa \|x-y\|)}{\|x-y\|}
         = \underbrace{\frac{\exp(i \kappa \|x-y\|
                                  - i \kappa \langle x-y, c \rangle)}
                            {\|x-y\|}}_{=:g_c(x,y)}
           \exp(i \kappa \langle x-y, c \rangle).
\end{equation}
Due to
\begin{align*}
  \kappa \|x-y\| - \kappa \langle x-y, c \rangle
  &= \kappa \|x-y\| (1 - \cos\angle(x-y, c))\\
  &\approx \frac{\kappa}{2} \|x-y\| \sin^2\angle(x-y,c),
\end{align*}
the function
\begin{equation*}
  g_c(x,y) = \frac{\exp(i \kappa \|x-y\| - i \kappa \langle x-y, c \rangle)}
                  {\|x-y\|}
\end{equation*}
is smooth as long as the angle between $x-y$ and $c$ is sufficiently
small and we are sufficiently far from the singularity.

Following \cite{MESCDA12}, we can approximate $g_c$ by interpolation.
We fix
\begin{itemize}
  \item interpolation points $(\xi_{t,\nu})_{\nu=1}^k$ for each
        cluster $t\in\ctI$ and
  \item corresponding Lagrange polynomials $(\ell_{t,\nu})_{\nu=1}^k$.
\end{itemize}
Tensor Chebyshev points are a good choice, since they lead to an
interpolation scheme of almost optimal stability.
The interpolating polynomial for $g_c$ is given by
\begin{equation*}
  \tilde g_{c,ts}(x,y)
  = \sum_{\nu=1}^k \sum_{\mu=1}^k g_c(\xi_{t,\nu},\xi_{s,\mu})
             \ell_{t,\nu}(x) \ell_{s,\mu}(y),
\end{equation*}
and using it to replace $g_c$ in (\ref{eq:helmholtz_c}) leads to
\begin{align*}
  g(x,y) &= g_c(x,y) \exp(i \kappa \langle x-y, c \rangle)
   \approx \tilde g_{c,ts}(x,y) \exp(i \kappa \langle x-y, c \rangle)\\
  &= \sum_{\nu=1}^k \sum_{\mu=1}^k
      g_c(\xi_{t,\nu}, \xi_{s,\mu})
      \exp(i \kappa \langle x, c \rangle) \ell_{t,\nu}(x)
      \exp(-i \kappa \langle y, c \rangle) \ell_{s,\mu}(y).
\end{align*}
We introduce
\begin{gather*}
  s_{b,\nu\mu} := g_c(\xi_{t,\nu},\xi_{s,\mu}),\\
  \ell_{tc,\nu}(x) := \exp(i \kappa \langle x, c \rangle) \ell_{t,\nu}(x),\qquad
  \ell_{sc,\mu}(x) := \exp(i \kappa \langle y, c \rangle) \ell_{s,\mu}(y),
\end{gather*}
and obtain
\begin{equation}\label{eq:kernel_apx}
  g(x,y) \approx \tilde g_b(x,y)
         := \sum_{\nu=1}^k \sum_{\mu=1}^k s_{b,\nu\mu}
                 \ell_{tc,\nu}(x) \overline{\ell_{sc,\mu}(y)}.
\end{equation}
Replacing $g$ by $\tilde g_b$ in (\ref{eq:matrix}) yields
\begin{align*}
  g_{ij} &\approx \int_\Omega \varphi_i(x) \int_\Omega
              \tilde g_b(x,y) \varphi_j(y) \,dy\,dx\\
  &= \sum_{\nu=1}^k \sum_{\mu=1}^k s_{b,\nu\mu}
         \underbrace{\int_\Omega \varphi_i(x)
                                \ell_{tc,\nu}(x) \,dx}_{=:v_{tc,i\nu}}
         \int_\Omega \varphi_j(y) \overline{\ell_{sc,\mu}(y)} \,dy\\
  &= \sum_{\nu=1}^k \sum_{\mu=1}^k s_{b,\nu\mu}
         v_{tc,i\nu} \overline{v_{sc,j\mu}}
   = (V_{tc} S_b V_{sc}^*)_{ij}
   \qquad\text{ for all } i\in\hat t,\ j\in\hat s.
\end{align*}
Due to $S_b\in\bbbc^{k\times k}$, this is a factorized low-rank approximation
\begin{equation}\label{eq:matrix_apx}
  G|_{\hat t\times\hat s} \approx V_{tc} S_b V_{sc}^*.
\end{equation}
In order to handle the matrices $V_{tc}$ efficiently, we take advantage
of the hierarchy provided by the cluster tree:
given a son $t'\in\sons(t)$ and the best approximation $c':=\sd{t}(c)$ of
the direction $c$ in $t'$, we look for a matrix $E_{t'c}\in\bbbc^{k\times k}$
such that
\begin{equation}\label{eq:nested_apx}
  V_{tc}|_{\hat t'\times k} \approx V_{t'c'} E_{t'c}.
\end{equation}
This property allows us to avoid storing $V_{tc}\in\bbbc^{\hat t\times k}$
by only storing the small matrices $E_{t'c}\in\bbbc^{k\times k}$.
We can construct the matrix $E_{t'c}$ by interpolating the function
\begin{equation*}
  x \mapsto \exp(i \kappa \langle x, c-c' \rangle) \ell_{t,\nu}(x).
\end{equation*}
Assuming that the angle between $c$ and $c'$ is sufficiently small,
this function is smooth and we find
\begin{align*}
  \ell_{tc,\nu}(x)
  &= \exp(i \kappa \langle x, c \rangle) \ell_{t,\nu}(x)
   = \exp(i \kappa \langle x, c' \rangle)
     \exp(i \kappa \langle x, c-c' \rangle) \ell_{t,\nu}(x)\\
  &\approx \exp(i \kappa \langle x, c' \rangle)
     \sum_{\nu'=1}^k
        \underbrace{\exp(i \kappa \langle \xi_{t',\nu'}, c-c' \rangle)
                    \ell_{t,\nu}(\xi_{t',\nu'})}_{=:e_{t'c,\nu'\nu}}
                    \ell_{t',\nu'}(x)\\
  &= \sum_{\nu'=1}^k e_{t'c,\nu'\nu} \ell_{t'c',\nu'}(x).
\end{align*}
This approach immediately yields
\begin{align*}
  v_{tc,i\nu}
  &= \int_\Omega \varphi_i(x) \ell_{tc,\nu}(x) \,dx
   \approx \sum_{\nu'=1}^k e_{t'c,\nu'\nu}
             \int_\Omega \varphi_i(x) \ell_{t'c',\nu'}(x) \,dx
   = (V_{t'c'} E_{t'c})_{i\nu}
\end{align*}
for all $i\in\hat t'$ and $\nu\in[1:k]$, which is equivalent
to (\ref{eq:nested_apx}).

The notation ``$E_{t'c}$'' (instead of something like ``$E_{t'tc'c}$''
listing all parameters) for the matrices is justified since the father
$t\in\ctI$ is uniquely determined by $t'\in\ctI$ due to the tree structure
and the direction $c'=\sd{t}(c)$ is uniquely determined by
$c\in\mathcal{D}_t$ due to our Definition~\ref{de:directions}.

Our (approximate) equation (\ref{eq:nested_apx}) gives rise to the following
definition.

%
%
\begin{definition}[Directional cluster basis]
Let $k\in\bbbn$, and let $V=(V_{tc})_{t\in\ctI,c\in\mathcal{D}_t}$ be a family
of matrices.
We call it a \emph{directional cluster basis} if
\begin{itemize}
  \item $V_{tc}\in\bbbc^{\hat t\times k}$ for all $t\in\ctI$
     and $c\in\mathcal{D}_t$, and
  \item there is a family
     $E=(E_{t'c})_{t\in\ctI,t'\in\sons(t),c\in\mathcal{D}_t}$
     such that
     \begin{align}\label{eq:nested}
       V_{tc}|_{\hat t'\times k} &= V_{t'c'} E_{t'c} &
       &\text{ for all } t\in\ctI,\ t'\in\sons(t),\ c\in\mathcal{D}_t,
                        \ c'=\sd{t}(c).
     \end{align}
\end{itemize}
The elements of the family $E$ are called \emph{transfer matrices} for
the directional cluster basis $V$, and $k$ is called its \emph{rank}.
\end{definition}

We can now define the class of matrices that is the subject of this
article:
we denote the \emph{leaves} of the block tree $\ctII$ by
\begin{equation*}
  \lfII := \{ b\in\ctII\ :\ \sons(b)=\emptyset \}
\end{equation*}
and have to represent each of the submatrices $G|_{\hat t\times\hat s}$
for $b=(t,s)\in\lfII$.
For most of these submatrices, we can find an approximation of the
form (\ref{eq:matrix_apx}).
These matrices are called \emph{admissible} and collected in a
subset
\begin{align*}
  \lfaII &:= \{ b\in\lfII\ :\ b \text{ is admissible} \}.\\
\intertext{The remaining blocks are called \emph{inadmissible} and
collected in the set}
  \lfiII &:= \lfII \setminus \lfaII.
\end{align*}
How to decide whether a block is admissible or not is the topic of the
next section.

%
%
\begin{definition}[Directional $\mathcal{H}^2$-matrix]
Let $V$ and $W$ be directional cluster bases for $\ctI$.
Let $G\in\bbbc^{\Idx\times\Idx}$ be matrix.
We call it a \emph{directional $\mathcal{H}^2$-matrix} or short an
\emph{$\mathcal{DH}^2$-matrix} if there are families
$S=(S_b)_{b\in\lfaII}$ and $(c_b)_{b\in\lfaII}$ such that
\begin{itemize}
  \item $S_b\in\bbbc^{k\times k}$ and $c_b\in\mathcal{D}_t=\mathcal{D}_s$
    for all $b=(t,s)\in\lfaII$, and
  \item $G|_{\hat t\times\hat s} = V_{tc} S_b W_{sc}^*$ with $c=c_b$ for all
    $b=(t,s)\in\lfaII$.
\end{itemize}
The elements of the family $S$ are called \emph{coupling matrices},
and $c_b$ is called the \emph{block direction} for $b\in\ctII$.
The cluster bases $V$ and $W$ are called the \emph{row cluster basis}
and \emph{column cluster basis}, respectively.

A \emph{$\mathcal{DH}^2$-matrix representation} of a
$\mathcal{DH}^2$-matrix $G$ consists of $V$, $W$, $S$ and the
family $(G|_{\hat b})_{b\in\lfiII}$ of \emph{nearfield matrices} corresponding
to the inadmissible leaves of $\ctII$.
\end{definition}

%
%
\begin{remark}[Directional $\mathcal{H}^2$-matrix]
The name ``directional $\mathcal{H}^2$-matrix'' is also used in
\cite{BEKUVE15}, but defined a little differently:
instead of using the representation (\ref{eq:matrix_apx}) for all
admissible blocks, the algorithm in \cite{BEKUVE15} uses
standard $\mathcal{H}$-matrices for small clusters.

In the case of a constant wave number $\kappa$, our approach
yields a complexity of $\mathcal{O}(n k)$ (cf. Theorem~\ref{th:complexity}),
while the approach in \cite{BEKUVE15} can be no better than
$\Omega(n k \log n)$.
\end{remark}

%
%
\section{Admissibility}
\label{se:admissibility}

In order to construct a $\mathcal{DH}^2$-matrix approximation,
we have to find a cluster tree $\ctI$, a block tree $\ctII$,
and a family of hierarchical directions $(\mathcal{D}_t)_{t\in\ctI}$
such that
\begin{align*}
  G|_{\hat t\times\hat s}
  &\approx V_{tc} S_b W_{sc}^* &
  &\text{ for all } b=(t,s)\in\lfaII,\ c=c_b.
\end{align*}
An analysis of the approximation scheme (cf., e.g., \cite{MESCDA12})
indicates that this is the case if three \emph{admissibility
conditions} hold:
\begin{subequations}\label{eq:admissibility}
\begin{align}
  \kappa \left\| \frac{m_t-m_s}{\|m_t-m_s\|} - c \right\|
  &\leq \frac{\eta_1}{\max\{\diam(B_t),\diam(B_s)\}},
          \label{eq:adm_directional}\\
  \kappa \max\{ \diam^2(B_t), \diam^2(B_s) \}
  &\leq \eta_2 \dist(B_t, B_s),\text{ and}
          \label{eq:adm_parabolic}\\
  \max\{ \diam(B_t), \diam(B_s) \}
  &\leq \eta_2 \dist(B_t, B_s)
          \label{eq:adm_standard},
\end{align}
\end{subequations}
where $m_t$ and $m_s$ denote the centers of the bounding boxes
$B_t$ and $B_s$ and $\eta_1,\eta_2>0$ are parameters that can
be chosen to balance storage requirements and accuracy.

The first condition (\ref{eq:adm_directional}) ensures that
the direction $c$ of the plane-wave approximation is sufficiently
close to the direction of the wave traveling from $m_t$ to $m_s$.

The second condition (\ref{eq:adm_parabolic}) is equivalent to
\begin{equation*}
  \frac{\max\{\diam(B_t),\diam(B_s)\}}
       {\dist(B_t,B_s)}
  \leq \frac{\eta_2}{\kappa \max\{\diam(B_t), \diam(B_s)\}},
\end{equation*}
it ensures that the angle between all vectors $x-y$ for $x\in B_t$
and $y\in B_s$ is bounded and that this bound shrinks when the wave
number or the cluster diameter grows.

The third condition (\ref{eq:adm_standard}) provides an
upper bound for the same angle that is independent of
wave number and cluster diameter.
This is the standard admissibility condition that is also used
for the Laplace equation or linear elasticity.

If a block $b=(t,s)$ satisfies the admissibility conditions
(\ref{eq:admissibility}), it is possible to prove that
directional interpolation converges at an exponential
rate that depends only on the parameters $\eta_1$, $\eta_2$,
and the stability constant of the interpolation scheme
\cite{MESCDA12,BOME15}.

In order to obtain a simple algorithm, we treat the first
condition (\ref{eq:adm_directional}) separately from the others:
for each level $\ell$ of the cluster tree, we compute the maximal
diameter
\begin{equation*}
  \delta_\ell := \max\{ \diam(B_t)\ :\ t\in\ctI,\ \level(t)=\ell \}
\end{equation*}
of all bounding boxes and then fix a set of directions $\mathcal{D}_\ell$
such that
\begin{align*}
  \min\{ \| z - c \|\ :\ c\in\mathcal{D}_\ell \}
  &\leq \frac{\eta_1}{\kappa \delta_\ell} &
  &\text{ for all } z\in\bbbr^3,\ \|z\|=1.
\end{align*}
If all clusters on level $\ell$ share this set $\mathcal{D}_\ell$ of
directions, the condition (\ref{eq:adm_directional}) is guaranteed
since $(m_t-m_s)/\|m_t-m_s\|$ is a unit vector for all $t,s\in\ctI$.

%
%
\begin{figure}
  \pgfdeclareimage[width=4cm]{directions}{fi_directions}

  \begin{center}
    \pgfuseimage{directions}
  \end{center}

  \caption{Constructing directions by projecting a regular mesh
           to the unit circle}
  \label{fi:projected_directions}
\end{figure}

In our numerical experiments, we construct the sets $\mathcal{D}_\ell$
by splitting the surface of the cube $[-1,1]^3$ into squares with
diameter $\leq 2 \eta_1 / (\kappa \delta_\ell)$, considering these squares'
midpoints $\tilde c$, and projecting them by $c:=\tilde c / \|\tilde c\|$
to the unit sphere.
The two-dimensional case is illustrated in
Figure~\ref{fi:projected_directions}.
By construction, each point on the cube's surface has a distance of
less than $\eta_1 / (\kappa \delta_\ell)$ to one of the midpoints, and we
only have to prove that the same holds for the points' projections to
the unit sphere.

%
%
\begin{lemma}[Projection]
\label{le:projection}
Let $x,y\in\bbbr^n$ with $\|x\|,\|y\|\geq 1$.
We have
\begin{equation*}
  \left\| \frac{x}{\|x\|} - \frac{y}{\|y\|} \right\|
  \leq \|x-y\|.
\end{equation*}
\end{lemma}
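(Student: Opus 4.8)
The plan is to reduce the inequality to a statement about the angle between $x$ and $y$, since both sides are clearly invariant under rotations and since the left-hand side only depends on the directions of $x$ and $y$. Write $\theta$ for the angle between $x$ and $y$ (so $\langle x,y\rangle = \|x\|\,\|y\|\cos\theta$), and abbreviate $a:=\|x\|\geq 1$, $b:=\|y\|\geq 1$. The squared left-hand side is
\begin{equation*}
  \left\| \frac{x}{\|x\|} - \frac{y}{\|y\|} \right\|^2
  = 2 - 2\cos\theta,
\end{equation*}
while the squared right-hand side is
\begin{equation*}
  \|x-y\|^2 = a^2 + b^2 - 2ab\cos\theta.
\end{equation*}
So the claim is equivalent to $2 - 2\cos\theta \leq a^2 + b^2 - 2ab\cos\theta$, i.e.\ to
\begin{equation*}
  2(ab - 1)\cos\theta \leq a^2 + b^2 - 2 = (a^2-1) + (b^2-1).
\end{equation*}

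First I would dispose of the case $\cos\theta \leq 0$: then the left-hand side is $\leq 0$ while the right-hand side is $\geq 0$ because $a,b\geq 1$, so the inequality holds trivially. For $\cos\theta > 0$ I would bound $\cos\theta \leq 1$, so it suffices to show $2(ab-1) \leq (a^2-1)+(b^2-1)$, i.e.\ $2ab \leq a^2 + b^2$, which is just the AM--GM inequality $(a-b)^2 \geq 0$. (Here one also needs $ab - 1 \geq 0$, which follows from $a,b\geq 1$, to justify replacing $\cos\theta$ by its upper bound $1$; if $ab-1<0$ the left-hand side is again nonpositive and we are back in the trivial case.)

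An alternative, perhaps cleaner, route avoids trigonometry entirely: since the left side is unchanged if we rescale $x$ and $y$, I could use the fact that for unit vectors the chord length is monotone, together with the observation that scaling a vector \emph{away} from the origin past the unit sphere can only move its tip closer, in chordal distance, to any other point on or outside the sphere. Concretely, one shows $\|x - y/\|y\|\| \leq \|x-y\|$ when $\|y\|\geq 1$ (the point $y/\|y\|$ lies on the segment from $0$ to $y$, hence is a convex combination of $0$ and $y$, and $\|x-0\|=\|x\|\geq 1 \geq \|x - y/\|y\|\|$... ) and then iterates on the other coordinate. However, this requires a small monotonicity lemma of its own, so the trigonometric computation above is probably the shortest self-contained argument.

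There is essentially no hard part here: the main (and only) subtlety is handling the sign of $\cos\theta$ and of $ab-1$ correctly so that the reduction to AM--GM is valid — i.e.\ making sure that when we bound $\cos\theta$ from above by $1$ we are multiplying by a nonnegative quantity. Once the case distinction $\cos\theta \leq 0$ versus $\cos\theta > 0$ is in place, everything collapses to $(a-b)^2\geq 0$.
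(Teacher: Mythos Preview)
Your trigonometric argument is correct and complete: reducing to $2(ab-1)\cos\theta \leq (a^2-1)+(b^2-1)$ and then splitting on the sign of $\cos\theta$ is clean, and since $a,b\geq 1$ forces $ab-1\geq 0$, the reduction to $(a-b)^2\geq 0$ goes through without trouble. (Your parenthetical about the case $ab-1<0$ is harmless but vacuous here.)

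This is a genuinely different route from the paper's proof. The paper proceeds geometrically: assuming $\|x\|\leq\|y\|$, it projects $x/\|x\|$ onto the line through $y$ to obtain a point $\tilde y$, then uses Pythagoras to split $\|x/\|x\| - y/\|y\|\|^2$ into an orthogonal component $\|x/\|x\|-\tilde y\|^2$ and a radial component along $y$; the radial piece is bounded via Cauchy--Schwarz ($\|x\|\,\|y\|\geq\langle x,y\rangle$), and the two pieces recombine into $\|x-y\|^2/\|x\|^2\leq\|x-y\|^2$. Your approach trades this orthogonal decomposition for a direct algebraic expansion and the law of cosines, landing on AM--GM instead of Cauchy--Schwarz. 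Your version is shorter and more elementary; the paper's version is coordinate-free and makes the role of the assumption $\|x\|\geq 1$ visible at the final step rather than buried in sign considerations. Both are perfectly adequate for this lemma.

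Your sketched ``alternative route'' (normalize one vector at a time) does not work as stated: the one-step inequality $\|x-y/\|y\|\|\leq\|x-y\|$ is equivalent, after expanding, to $2\langle x,\hat y\rangle\leq\|y\|+1$, which can fail when $\|x\|$ is large. You were right to abandon it in favour of the direct computation.
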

\begin{proof}
We assume $1\leq \|x\|\leq\|y\|$ without loss of generality
and define
\begin{equation*}
  \tilde y := \frac{\langle x, y \rangle}{\|x\|\,\|y\|^2} y
  \qquad\text{ such that }\qquad
  \langle \frac{x}{\|x\|} - \tilde y, y \rangle
  = \frac{\langle x, y \rangle}{\|x\|}
    - \frac{\langle x, y \rangle}{\|x\|} = 0,
\end{equation*}
we can apply Pythagoras' equation and the Cauchy-Schwarz
inequality to find
\begin{align*}
  \left\| \frac{x}{\|x\|} - \frac{y}{\|y\|} \right\|^2
  &= \left\| \frac{x}{\|x\|} - \tilde y + 
             \left( \frac{\langle x, y \rangle}{\|x\|\,\|y\|^2}
                    - \frac{1}{\|y\|} \right) y \right\|^2\\
  &= \left\| \frac{x}{\|x\|} - \tilde y \right\|^2
     + \left( \frac{\|x\|\,\|y\| - \langle x, y \rangle}
                   {\|x\|\,\|y\|^2} \right)^2 \|y\|^2\\
  &\leq \left\| \frac{x}{\|x\|} - \tilde y \right\|^2
     + \left( \frac{\|y\|^2 - \langle x, y \rangle}
                   {\|x\|\,\|y\|^2} \right)^2 \|y\|^2\\
  &= \left\| \frac{x}{\|x\|} - \tilde y
             + \left(\frac{\langle x, y \rangle}{\|x\|\,\|y\|^2}
                   - \frac{1}{\|x\|} \right) y \right\|^2
   = \frac{\|x-y\|^2}{\|x\|^2} \leq \|x-y\|^2.
\end{align*}
\end{proof}

The construction of a suitable block tree can now be accomplished by
standard algorithms \cite{GRHA02,BO10} if we replace the standard admissibility
condition by (\ref{eq:adm_parabolic}) and (\ref{eq:adm_standard}).

%
%
\section{Matrix-vector multiplication}

Let $G$ be a $\mathcal{DH}^2$-matrix for the directional cluster
bases $V$ and $W$, and let $x\in\bbbc^\Idx$.
We are looking for an efficient algorithm for evaluating the
matrix-vector product $y=Gx$.

We can follow the familiar approach of fast multipole and
$\mathcal{H}^2$-matrix techniques:
considering that the submatrices are factorized into three
terms
\begin{align*}
  G|_{\hat t\times\hat s} &= V_{tc} S_b W_{sc}^* &
  &\text{ for all } b=(t,s)\in\lfaII,
\end{align*}
the algorithm is split into three phases:
in the first phase, called the \emph{forward transformation},
we multiply by $W_{sc}^*$ and compute
\begin{subequations}
\begin{align}
  \widehat{x}_{sc} &= W_{sc}^* x|_{\hat s} &
  &\text{ for all } s\in\ctI,\ c\in\mathcal{D}_s,\label{eq:mvm1}
\intertext{in the second phase, the \emph{coupling step},
we multiply these coefficient vectors by the coupling matrices $S_b$
and obtain}
  \widehat{y}_{tc}
  &:= \sum_{\substack{b=(t,s)\in\ctII\\ c=c_b}} S_b \widehat{x}_{sc} &
  &\text{ for all } t\in\ctI,\ c\in\mathcal{D}_t,\label{eq:mvm2}
\intertext{and in the final phase, the \emph{backward transformation},
we multiply by $V_{tc}$ to get the result}
  y_i &= \sum_{\substack{t\in\ctI,\ c\in\mathcal{D}_t\\ i\in\hat t}}
           (V_{tc} \widehat{y}_{tc})_i &
  &\text{ for all } i\in\Idx.\label{eq:mvm3}
\end{align}
\end{subequations}
The first and third phase can be handled efficiently by using
the transfer matrices $E_{t'c}$:
let $s\in\ctI$ with $\sons(s)\neq\emptyset$, and let $c\in\mathcal{D}_s$
and $c':=\sd{s}(c)$.
Due to Definition~\ref{de:cluster}, the set $\{\hat s'\ :\ s'\in\sons(s)\}$
is a disjoint partition of the index set $\hat s$.
Combined with (\ref{eq:nested}), this implies
\begin{equation*}
  W_{sc}^* x|_{\hat s}
   = \sum_{s'\in\sons(s)} (W_{sc}|_{\hat s'\times k})^* x|_{\hat s'}
   = \sum_{s'\in\sons(s)} E_{s'c}^* W_{s'c'}^* x|_{\hat s'}
   = \sum_{s'\in\sons(s)} E_{s'c}^* \widehat{x}_{s'c'},
\end{equation*}
and we can prepare \emph{all} coefficient vectors $\widehat{x}_{sc}$
by the simple recursion given on the left of Figure~\ref{fi:forward_backward}.
By similar arguments we find that the third phase can also be handled
by the recursion given on the right of Figure~\ref{fi:forward_backward}.

%
%
\begin{figure}
  \begin{minipage}[t]{0.47\textwidth}
  \begin{tabbing}
    \textbf{procedure} forward($s$, $x$, \textbf{var} $\widehat{x}$);\\
    \textbf{if} $\sons(s)=\emptyset$ \textbf{then}\\
    \quad\= \textbf{for} $c\in\mathcal{D}_s$ \textbf{do}
               $\widehat{x}_{sc} \gets W_{sc}^* x|_{\hat s}$\\
    \textbf{else begin}\\
    \> \textbf{for} $s'\in\sons(s)$ \textbf{do}
               forward($s'$, $x$, $\widehat{x}$);\\
    \> \textbf{for} $c\in\mathcal{D}_s$ \textbf{do begin}\\
    \> \quad\= $\widehat{x}_{sc} \gets 0$;\quad
               $c' \gets \sd{s}(c)$;\\
    \> \> \textbf{for} $s'\in\sons(s)$ \textbf{do}\\
    \> \> \quad\= $\widehat{x}_{sc} \gets \widehat{x}_{sc}
                  + E_{s'c}^* \widehat{x}_{s'c'}$\\
    \> \textbf{end}\\
    \textbf{end}
  \end{tabbing}
  \end{minipage}%
  \hfill%
  \begin{minipage}[t]{0.47\textwidth}
  \begin{tabbing}
    \textbf{procedure} backward($t$, \textbf{var} $\widehat{y}$, $y$);\\
    \textbf{if} $\sons(t)=\emptyset$ \textbf{then}\\
    \quad\= \textbf{for} $c\in\mathcal{D}_s$ \textbf{do}
               $y|_{\hat t} \gets y|_{\hat t} + V_{tc} \widehat{y}_{tc}$\\
    \textbf{else begin}\\
    \> \textbf{for} $c\in\mathcal{D}_t$ \textbf{do}\\
    \> \quad\= \textbf{for} $t'\in\sons(t)$ \textbf{do begin}\\
    \> \> \quad\= $c' \gets \sd{t}(c)$;\\
    \> \> \> $\widehat{y}_{t'c'} \gets \widehat{y}_{t'c'}
                  + E_{t'c} \widehat{y}_{tc}$\\
    \> \> \textbf{end};\\
    \> \textbf{for} $t'\in\sons(t)$ \textbf{do}
               backward($t'$, $\widehat{y}$, $y$)\\
    \textbf{end}
  \end{tabbing}
  \end{minipage}

  \caption{Fast forward and backward transformation}
  \label{fi:forward_backward}
\end{figure}

The submatrices corresponding to inadmissible leaves $b=(t,s)\in\lfII$
are stored as standard arrays and can be evaluated accordingly.

We can see that the algorithm performs exactly one
matrix-vector multiplication with each of the matrices appearing in the
$\mathcal{DH}^2$-matrix representation:
the forward transformation uses $W_{sc}$ in the leaves and
$E_{s'c}$ in the other clusters.
The coupling step uses $S_b$ for the admissible leaves
$b=(t,s)\in\lfII$.
The backward transformation uses $V_{tc}$ in the leaves and
$E_{t'c}$ in the other clusters.
The nearfield computation uses $G|_{\hat t\times\hat s}$ for the
inadmissible leaves $b=(t,s)\in\lfII$.
This means that finding a bound for the storage requirements of
the $\mathcal{DH}^2$-matrix representation also provides us with
a bound for the computational work for the matrix-vector
multiplication.

%
%
\section{Complexity}

In order to obtain an upper bound for the storage requirements,
we follow the approach presented in \cite{BEKUVE15}:
as in the standard theory (cf. \cite{GRHA02}), we start by investigating
the cardinalities of the sets
\begin{align*}
  \brow(t) &:= \{ s\in\ctI\ :\ (t,s)\in\ctII \} &
  &\text{ for all } t\in\ctI,\\
  \bcol(s) &:= \{ t\in\ctI\ :\ (t,s)\in\ctII \} &
  &\text{ for all } s\in\ctI.
\end{align*}
Since the admissibility condition is symmetric, we have
$\#\brow(t)=\#\bcol(t)$ and can therefore focus on deriving
bounds for $\#\brow(t)$.

For the sake of simplicity, we assume that the bounding boxes for
clusters $t\in\ctIl{\ell}$ on a given level $\ell$ are
translation-equivalent, i.e., that there is a ``reference box''
$B_\ell$ such that for all clusters $t\in\ctIl{\ell}$ we can find
$d\in\bbbr^3$ satisfying
\begin{equation}\label{eq:bbox_similar}
  B_t = B_\ell + d.
\end{equation}
This property can be easily guaranteed during the construction of
the bounding boxes by adding suitable padding.

Since the bounding box $B_t$ of a cluster $t\in\ctI$ should not
be significantly larger than the union of the bounding boxes of
its sons, we can assume that there is a constant $\Csb\in\bbbr_{\geq 1}$
such that
\begin{align}\label{eq:son_boxes}
  \diam(B_t) &\leq \Csb \diam(B_{t'}) &
  &\text{ for all } t\in\ctI,\ t'\in\sons(t).
\end{align}
We use the minimal block tree satisfying the admissibility condition,
therefore a block has sons only if it is inadmissible.
Assuming that there is a constant $\Csn\geq 1$ such that
\begin{align}\label{eq:sons_bound}
  \#\sons(t) &\leq \Csn, &
  \#\sons(t) &\neq 1 &
  &\text{ for all } t\in\ctI
\end{align}
holds, bounding the number of inadmissible blocks therefore
gives rise to a bound for all blocks.

To find an estimate of this kind, we make use of the surface
measure in $\Omega$:
we will prove that inadmissible blocks have to correspond to
subsets of $\Omega$ that are geometrically close to each
other and then show that there can be only a limited number
of these blocks.
To this end, we consider the intersection of three-dimensional
balls with the surface $\Omega$.
Let
\begin{align*}
  \mathcal{B}(x,r)
  &:= \{ y\in\bbbr^3\ :\ \|y-x\| \leq r \} &
  &\text{ for all } x\in\bbbr^3,\ r\in\bbbr_{\geq 0}
\end{align*}
denote the three-dimensional ball of radius $r$ around $x$.
We require the surface $\Omega$ to be ``reasonably similar'' to a
two-dimensional plane, i.e., that there are constants
$\Cbp,\Cbb\in\bbbr_{>0}$ such that
\begin{subequations}\label{eq:planar}
\begin{align}
  |\Omega\cap\mathcal{B}(x,r)|
  &\leq \Cbp r^2 &
  &\text{ for all } x\in\bbbr^3,\ r\in\bbbr_{\geq 0},
      \label{eq:planar_ball}\\
  \diam^2(B_t) &\leq \Cbb |B_t\cap\Omega| &
  &\text{ for all } t\in\ctI,
      \label{eq:planar_bbox}
\end{align}
\end{subequations}
where $|X|$ denotes the surface measure of a measurable
set $X\subseteq\Omega$.

In order to be able to draw conclusions about the clusters $t\in\ctI$
based on the sets $B_t\cap\Omega$, we have to limit the overlap of
these sets, i.e., we require that there is a constant
$\Cov\in\bbbr_{>0}$ such that
\begin{align}\label{eq:overlap}
  \#\{ t\in\ctIl{\ell}\ :\ x\in B_t \}
  &\leq \Cov &
  &\text{ for all } x\in\Omega,\ \ell\in\bbbn_0.
\end{align}

In order to estimate the number of clusters, we assume that there
is a constant $\Crs\in\bbbr_{>0}$ such that
\begin{subequations}\label{eq:leaves_bound}
\begin{align}
  \Csb \kappa \diam(B_t) &\leq 1 &
  &\text{ for all leaves } t\in\lfI,\label{eq:lowfreq_leaves}\\
  \Crs^{-1} k &\leq \#\hat t \leq \Crs k &
  &\text{ for all leaves } t\in\lfI.\label{eq:rank_leaves}.
\end{align}
\end{subequations}
Finally we require a weak mesh regularity assumption:
clusters close to a leaf cluster should not be significantly larger
than the leaf, i.e., we assume that there is a constant
$\Cun\in\bbbr_{>0}$ such that
\begin{align}\label{eq:cluster_uniform}
  \eta_2 \dist(B_t, B_s) < \diam(B_t)
  &\Rightarrow \#\hat s \leq \Cun \#\hat t &
  &\text{ for all } t\in\lfI,\ s\in\ctI,\ \level(t)=\level(s).
\end{align}
Using the assumptions (\ref{eq:bbox_similar}) to
(\ref{eq:cluster_uniform}), we can now proceed to prove the required
complexity estimates.

%
%
\begin{lemma}[Sparsity]
\label{le:sparsity}
Let (\ref{eq:bbox_similar}), (\ref{eq:son_boxes}),
(\ref{eq:sons_bound}), (\ref{eq:planar}), and (\ref{eq:overlap}) hold.
We have
\begin{align}\label{eq:sparsity}
  \#\brow(t),\#\bcol(t)
  &\leq \begin{cases}
          \Csp &\text{ if } \Csb \kappa \diam(B_t) \leq 1,\\
          \Csp \kappa^2 \diam^2(B_t) &\text{ otherwise}
        \end{cases} &
  &\text{ for all } t\in\ctI,
\end{align}
with $\Csp := \Csn \Csb^2 \Cbb
\Cov \Cbp (3/2 + 1/\eta_2)^2$.
\end{lemma}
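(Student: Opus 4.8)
The plan is to fix a cluster $t\in\ctI$ on level $\ell$ and to show that every $s\in\brow(t)$ forces the set $B_s\cap\Omega$ to live inside one fixed ball whose radius is controlled by $\diam(B_t)$ and, in the oscillatory regime, by $1/(\kappa\diam(B_t))$. Once all these sets are trapped in a ball of bounded radius, the planarity assumptions~(\ref{eq:planar}) together with the overlap bound~(\ref{eq:overlap}) turn a volume (surface-measure) count into a bound on the number of clusters, and finally the son bound~(\ref{eq:sons_bound}) passes from inadmissible blocks to all blocks.

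First I would translate ``$(t,s)\in\ctII$'' into a geometric statement. Because we use the minimal block tree, a block appears in $\ctII$ only if its father is inadmissible; since $t,s$ are on the same level, the father of $(t,s)$ is $(t^+,s^+)$ with $t^+$ the father of $t$, and inadmissibility of the father means at least one of the conditions~(\ref{eq:adm_parabolic}), (\ref{eq:adm_standard}) fails for the boxes $B_{t^+}$, $B_{s^+}$. Using~(\ref{eq:son_boxes}) to compare $\diam(B_{t^+})$ with $\diam(B_t)$ (a factor $\Csb$) and the elementary inequality $\dist(B_t,B_s)\le\dist(B_{t^+},B_{s^+})+\diam(B_{t^+})+\diam(B_{s^+})$, I would derive: if the standard condition~(\ref{eq:adm_standard}) is the one that fails, then $\dist(B_t,B_s)\le(1/\eta_2+2)\Csb\max\{\diam(B_t),\diam(B_s)\}$; if instead the parabolic condition~(\ref{eq:adm_parabolic}) fails, then $\dist(B_t,B_s)\le(1/\eta_2)\kappa\,\Csb^2\max\{\diam^2(B_t),\diam^2(B_s)\}+2\Csb\max\{\diam(B_t),\diam(B_s)\}$. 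In the regime $\Csb\kappa\diam(B_t)\le1$ the first alternative dominates and gives $\dist(B_t,B_s)\lesssim\diam(B_t)$ (here I would also need that translation-equivalent boxes on one level have equal diameter, so $\diam(B_s)=\diam(B_t)=\delta_\ell$, which is where~(\ref{eq:bbox_similar}) enters); in the other regime one gets $\dist(B_t,B_s)\lesssim\kappa\,\diam^2(B_t)$. Either way $B_s$ is contained in a ball $\mathcal{B}(m_t,R)$ with $R=\frac32\Csb\diam(B_t)$ in the low-frequency case and $R\sim\kappa\diam^2(B_t)$ otherwise — matching the claimed factor $(3/2+1/\eta_2)$ once the constants are bookkept carefully.

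Next comes the counting. For each $s\in\brow(t)$ we have $B_s\cap\Omega\subseteq\mathcal{B}(m_t,R)\cap\Omega$, and by~(\ref{eq:planar_bbox}) $\diam^2(B_s)\le\Cbb|B_s\cap\Omega|$, while $\diam(B_s)=\diam(B_t)$ by~(\ref{eq:bbox_similar}); summing the indicator functions of the sets $B_s$, using the overlap bound~(\ref{eq:overlap}) to control $\sum_{s\in\brow(t)}\mathbf 1_{B_s}\le\Cov$ pointwise on $\Omega$, and integrating against the surface measure gives
\begin{equation*}
  \#\brow(t)\,\diam^2(B_t)\le\Cbb\sum_{s\in\brow(t)}|B_s\cap\Omega|
  \le\Cbb\,\Cov\,|\mathcal{B}(m_t,R)\cap\Omega|\le\Cbb\,\Cov\,\Cbp\,R^2,
\end{equation*}
where the last step is~(\ref{eq:planar_ball}). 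Dividing by $\diam^2(B_t)$ and inserting the two values of $R$ yields $\#\brow(t)\le\Cbb\Cov\Cbp(3/2\,\Csb+\Csb/\eta_2)^2$ in the low-frequency case and $\#\brow(t)\le\Cbb\Cov\Cbp\Csb^4\cdot(\text{const})\cdot\kappa^2\diam^2(B_t)$ otherwise. This bounds the number of \emph{inadmissible-father} blocks; multiplying by $\Csn$ via~(\ref{eq:sons_bound}) (each inadmissible block has at most $\Csn$ sons, and every block in $\ctII$ is either a son of an inadmissible block or the root) upgrades this to a bound on $\#\brow(t)$ for all of $\ctII$, and the symmetry of the admissibility conditions gives the identical bound for $\#\bcol(t)$. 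Collecting the constants reproduces $\Csp=\Csn\Csb^2\Cbb\Cov\Cbp(3/2+1/\eta_2)^2$.

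The main obstacle is the constant bookkeeping in the first step: one has to be disciplined about which of the two admissibility inequalities fails, propagate the comparison~(\ref{eq:son_boxes}) from father to son cluster exactly once, and combine the distance triangle inequality with the translation-equivalence~(\ref{eq:bbox_similar}) so that the final radius is exactly $(3/2+1/\eta_2)\Csb\diam(B_t)$ (resp. its $\kappa$-weighted analogue) rather than something with a worse power of $\Csb$ — the statement pins down $\Csp$ to the precise factor $\Csb^2$, so the argument must not lose extra factors of $\Csb$ along the way. The counting step itself is routine once the geometric containment is in hand.
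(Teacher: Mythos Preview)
Your overall strategy---trap the partner boxes in a ball of controlled radius, then convert a surface-measure estimate into a cluster count via~(\ref{eq:planar}) and~(\ref{eq:overlap})---is exactly the paper's. The counting step you write down is essentially verbatim the paper's computation. Where you diverge is in \emph{which level} you carry out the containment, and this is precisely the bookkeeping obstacle you flag at the end.

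The paper does \emph{not} translate the father's inadmissibility into a bound on $\dist(B_t,B_s)$ at the son level. Instead it works entirely at the father level: it introduces $C_{t^+}=\{s^+:(t^+,s^+)\text{ inadmissible}\}$, observes that for $s^+\in C_{t^+}$ one of~(\ref{eq:adm_parabolic}),~(\ref{eq:adm_standard}) fails \emph{directly for $B_{t^+},B_{s^+}$}, and hence $B_{s^+}\subseteq\mathcal{B}(m_{t^+},(3/2+1/\eta_2)\diam(B_{t^+}))$ in the low-frequency case (and the analogous $\kappa$-weighted ball otherwise). The counting then bounds $\#C_{t^+}$, the factor $\Csn$ enters once via $\#\brow(t)\le\Csn\,\#C_{t^+}$, and the factor $\Csb^2$ enters once at the very end via $\diam^2(B_{t^+})\le\Csb^2\diam^2(B_t)$. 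This is how the constant $\Csp=\Csn\Csb^2\Cbb\Cov\Cbp(3/2+1/\eta_2)^2$ falls out cleanly.

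Your route---pushing the inadmissibility down to $\dist(B_t,B_s)$ via the triangle inequality---picks up an extra additive $2\diam(B_{t^+})\le 2\Csb\diam(B_t)$, so your ball radius becomes $(3/2+(2+1/\eta_2)\Csb)\diam(B_t)$ rather than $(3/2+1/\eta_2)\Csb\diam(B_t)$, and the stated constant cannot be recovered. Moreover, once you have counted $s\in\brow(t)$ directly at level $\ell$, the final multiplication by $\Csn$ is redundant: every such $s$ already arose from an inadmissible father, so you have bounded $\#\brow(t)$ itself, not the number of inadmissible fathers. The fix is simply to stay at the father level throughout the geometric step, as the paper does; then both $\Csn$ and $\Csb^2$ appear exactly once and for the right reason.
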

\begin{proof}
Let $t\in\ctI$.

We denote the set of inadmissible blocks connected to a
cluster $t\in\ctI$ by
\begin{equation*}
  C_t := \{ s\in\ctI\ :\ (t,s)\in\ctII \text{ is inadmissible} \}.
\end{equation*}
Given a non-root block $(t,s)\in\ctII$, its father $(t^+,s^+)$
satisfies $s^+\in C_{t^+}$ by construction, so we can use
(\ref{eq:sons_bound}) to find
\begin{align}
  \brow(t) &\subseteq \bigcup_{s^+\in C_{t^+}} \sons(s^+), &
  \#\brow(t) &\leq \sum_{s^+\in C_{t^+}} \#\sons(s^+)
              \leq \Csn \#C_{t^+}.\label{eq:brow_Ct}
\end{align}
Our goal is now to bound the cardinality of the sets $C_t$.

We first consider the case $\kappa \diam(B_t) \leq 1$.
In this case, (\ref{eq:adm_standard}) implies (\ref{eq:adm_parabolic}),
so for each $s\in C_t$ the condition (\ref{eq:adm_standard}) does
not hold.
This implies $\eta_2 \dist(B_t,B_s) < \diam(B_t)$, i.e., there are
$x\in B_t$ and $y\in B_s$ such that
\begin{equation*}
  \|x-y\| < \diam(B_t) / \eta_2.
\end{equation*}
Let $m_t\in B_t$ again denote the midpoint of $B_t$ and let $z\in B_s$.
By (\ref{eq:bbox_similar}) and the triangle inequality, we have
\begin{align*}
  \|z-m_t\|
  &\leq \|z-y\| + \|y-x\| + \|x-m_t\|\\
  &< \diam(B_s) + \diam(B_t) / \eta_2 + \diam(B_t) / 2
   = (3/2 + 1/\eta_2) \diam(B_t),
\end{align*}
i.e., $B_s \subseteq \mathcal{B}_t
:= \mathcal{B}(m_t, (3/2 + 1/\eta_2) \diam(B_t))$.
Using (\ref{eq:bbox_similar}), (\ref{eq:planar}) and (\ref{eq:overlap}),
we obtain
\begin{align*}
  \diam^2(B_t) \#C_t
  &= \sum_{s\in C_t} \diam^2(B_s)
   \leq \sum_{s\in C_t} \Cbb |B_s\cap\Omega|
   = \Cbb \int_{\mathcal{B}_t\cap\Omega} \sum_{s\in C_t} 1_{B_s}(x) \,dx\\
  &\leq \Cbb \int_{\mathcal{B}_t\cap\Omega} \Cov \,dx
   = \Cbb \Cov |\mathcal{B}_t\cap\Omega|\\
  &\leq \Cbb \Cov \Cbp (3/2 + 1/\eta_2)^2 \diam^2(B_t)
   = \widehat{C}_\text{sp} \diam^2(B_t)
\end{align*}
with $\widehat{C}_\text{sp} := \Cbb \Cov \Cbp
(3/2 + 1/\eta_2)^2$, and conclude $\#C_t\leq \widehat{C}_\text{sp}$.

Let us consider the case $\kappa \diam(B_t) > 1$.
Now (\ref{eq:adm_parabolic}) implies (\ref{eq:adm_standard}), so for
each $s\in C_t$ the condition (\ref{eq:adm_parabolic}) does not hold.
This implies $\eta_2 \dist(B_t, B_s) < \kappa \diam^2(B_t)$.
By the same arguments as before we find
\begin{align*}
  \|z-m_t\|
  &< \diam(B_s) + \kappa \diam^2(B_t) / \eta_2 + \diam(B_t) / 2\\
  &= \kappa \diam^2(B_t) / \eta_2 + 3 \diam(B_t) / 2
   < \kappa \diam^2(B_t) / \eta_2 + 3 \kappa \diam^2(B_t) / 2\\
  &= \kappa (3/2 + 1/\eta_2) \diam^2(B_t)
     \qquad\text{ for all } z\in B_s
\end{align*}
and conclude
\begin{equation*}
  \diam^2(B_t) \#C_t
  \leq \Cbb \Cov \Cbp \kappa^2 (3/2 + 1/\eta_2)
           \diam^4(B_t)
  = \widehat{C}_\text{sp} \kappa^2 \diam^4(B_t),
\end{equation*}
which gives us
\begin{equation*}
  \#C_t \leq \widehat{C}_\text{sp} \kappa^2 \diam^2(B_t).
\end{equation*}
Now we can return to the final result.
If $t$ is the root, we have $\#\brow(t)=\#\bcol(t)=1$ and the
estimate is trivial.

Let $t\neq\treeroot(\ctI)$, and let $t^+\in\ctI$ denote its father.
If $\Csb \kappa \diam(B_t) \leq 1$ holds, (\ref{eq:son_boxes})
yields $\kappa \diam(B_{t^+}) \leq \Csb \kappa \diam(B_t) \leq 1$
and we obtain $\#C_{t^+}\leq\widehat{C}_\text{sp}$ and therefore
$\#\brow(t)\leq \Csn \widehat{C}_\text{sp}\leq \Csp$ using
(\ref{eq:brow_Ct}).

Otherwise, i.e., if $\kappa \diam(B_{t^+}) > 1$ holds, we have
$C_{t^+}\leq \widehat{C}_{\text{sp}} \kappa^2 \diam^2(B_{t^+})$, and we can use
(\ref{eq:son_boxes}) to get
$C_{t^+}\leq \widehat{C}_\text{sp} \Csb^2 \kappa^2 \diam^2(B_t)$.
Applying (\ref{eq:brow_Ct}) completes the proof.
\end{proof}

The estimate (\ref{eq:sparsity}) is our generalization of the
\emph{sparsity assumption} used in standard $\mathcal{H}$-matrix
methods \cite{GRHA02}.

%
%
\begin{lemma}[Clusters]
\label{le:clusters}
Let (\ref{eq:bbox_similar}), (\ref{eq:sons_bound}),
(\ref{eq:planar_bbox}), (\ref{eq:overlap}) and (\ref{eq:rank_leaves}) hold.
We have
\begin{subequations}\label{eq:tree_bound}
\begin{align}
  \#\ctIl{\ell}
  &\leq \Clv \frac{|\Omega|}{\diam^2(B_\ell)} &
  &\text{ for all } \ell\in\bbbn_0,\label{eq:level_bound}\\
  \#\ctI
  &\leq \Clv \#\Idx / k\label{eq:cluster_bound}
\end{align}
\end{subequations}
with $\Clv := \max\{ \Cbb \Cov, 2 \Crs \}$.
\end{lemma}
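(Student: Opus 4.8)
The plan is to prove the two bounds in sequence, deriving the cluster bound (\ref{eq:cluster_bound}) from the level bound (\ref{eq:level_bound}).

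\textbf{Step 1: the level bound.} Fix $\ell\in\bbbn_0$. The key idea is a volume/overlap argument on the surface $\Omega$, analogous to the one used inside the proof of Lemma~\ref{le:sparsity}. For each $t\in\ctIl{\ell}$ we have $\diam^2(B_t)\leq\Cbb|B_t\cap\Omega|$ by (\ref{eq:planar_bbox}), and by translation-equivalence (\ref{eq:bbox_similar}) we have $\diam(B_t)=\diam(B_\ell)$, so $\diam^2(B_\ell)\leq\Cbb|B_t\cap\Omega|$. Summing over $t\in\ctIl{\ell}$ and swapping the sum with the integral,
\begin{equation*}
  \#\ctIl{\ell}\,\diam^2(B_\ell)
  \leq \Cbb\sum_{t\in\ctIl{\ell}}\int_\Omega 1_{B_t}(x)\,dx
  = \Cbb\int_\Omega \#\{t\in\ctIl{\ell}:x\in B_t\}\,dx
  \leq \Cbb\Cov|\Omega|,
\end{equation*}
using the overlap bound (\ref{eq:overlap}). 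Dividing by $\diam^2(B_\ell)$ gives $\#\ctIl{\ell}\leq\Cbb\Cov|\Omega|/\diam^2(B_\ell)\leq\Clv|\Omega|/\diam^2(B_\ell)$ since $\Clv\geq\Cbb\Cov$.

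\textbf{Step 2: the cluster bound.} Here the plan is to split $\ctI$ into leaves and non-leaves and count each separately. For the leaves $\lfI$, the index sets $\{\hat t:t\in\lfI\}$ partition $\Idx$ (every index lies in exactly one leaf since the leaf index sets cover the root and siblings are disjoint), and (\ref{eq:rank_leaves}) gives $\#\hat t\geq\Crs^{-1}k$, hence $\#\lfI\leq\Crs\#\Idx/k$. For the non-leaf clusters, I use (\ref{eq:sons_bound}): every non-leaf has at least two sons, so in the forest rooted at $\treeroot(\ctI)$ the number of internal nodes is at most the number of leaves minus one, i.e., $\#(\ctI\setminus\lfI)\leq\#\lfI-1<\#\lfI$. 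Combining, $\#\ctI=\#\lfI+\#(\ctI\setminus\lfI)\leq 2\#\lfI\leq 2\Crs\#\Idx/k\leq\Clv\#\Idx/k$, using $\Clv\geq 2\Crs$.

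\textbf{Main obstacle.} The arguments are each short, so the only thing to be careful about is Step~2's bound on the number of internal nodes: the clean inequality "internal nodes $<$ leaves" relies precisely on the "$\#\sons(t)\neq 1$" half of (\ref{eq:sons_bound}) — without it a long chain of single-child nodes would break the count. One should state this dependence explicitly. A minor point is that $\#\Idx=\sum_{t\in\lfI}\#\hat t$ requires knowing the leaf labels partition $\Idx$, which follows from Definition~\ref{de:cluster} by induction on the tree; this is standard but worth a one-line mention. Everything else is a direct substitution of the hypotheses.
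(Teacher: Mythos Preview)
Your proof is correct and follows essentially the same route as the paper: the level bound via the surface-measure/overlap argument, and the cluster bound via $\#\lfI\leq\Crs\,\#\Idx/k$ together with $\#\ctI\leq 2\,\#\lfI-1$ from the no-single-child part of (\ref{eq:sons_bound}). One cosmetic remark: your opening sentence announces that the cluster bound will be derived from the level bound, but your Step~2 --- correctly, and just as in the paper --- does not use the level bound at all.
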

\begin{proof}
Let $\ell\in\bbbn_0$.
Combining (\ref{eq:bbox_similar}), (\ref{eq:planar_bbox}) and
(\ref{eq:overlap}) yields
\begin{align*}
  \diam^2(B_\ell) \#\ctIl{\ell}
  &= \sum_{t\in\ctIl{\ell}} \diam^2(B_t)
   \leq \Cbb \sum_{t\in\ctIl{\ell}} |B_t\cap\Omega|
   = \Cbb \int_\Omega \sum_{t\in\ctIl{\ell}} 1_{B_t}(x) \,dx\\
  &\leq \Cbb \Cov \int_\Omega 1 \,dx
   = \Cbb \Cov |\Omega|
   \leq \Clv |\Omega|,
\end{align*}
and dividing by $\diam^2(B_\ell)$ gives us the bound for $\#\ctIl{\ell}$.

For the bound for $\#\ctI$, we notice that Definition~\ref{de:cluster}
implies that the sets $\hat t$ for leaves $t\in\lfI$ are pairwise
disjoint.
Using (\ref{eq:rank_leaves}), we find
\begin{equation*}
  \#\lfI = \sum_{t\in\lfI} 1
  \leq \sum_{t\in\lfI} \Crs \frac{\#\hat t}{k}
  = \frac{\Crs}{k} \sum_{t\in\lfI} \#\hat t
  = \frac{\Crs}{k} \#\bigcup_{t\in\lfI} \hat t
  = \frac{\Crs}{k} \#\Idx.
\end{equation*}
With (\ref{eq:sons_bound}), we have $\sons(t)\neq 1$ for all clusters,
and a simple induction yields
\begin{equation*}
  \#\ctI \leq 2 \#\lfI - 1
  < 2 \Crs \#\Idx / k
  \leq \Clv \#\Idx / k.
\end{equation*}
\end{proof}

%
%
\begin{lemma}[Directional cluster basis]
\label{le:storage_basis}
Let (\ref{eq:bbox_similar}), (\ref{eq:sons_bound}), (\ref{eq:planar}),
(\ref{eq:overlap}) and (\ref{eq:leaves_bound}) hold, and
let $p_\Idx>0$.
A directional cluster basis of rank $k$ requires not more than
\begin{equation*}
  \Ccb ( k \#\Idx + p_\Idx k^2 \kappa^2 ) \text{ units of storage,}
\end{equation*}
where
$\Ccb := 2 \Csp (1 + \Csn \Clv \max\{ 1, |\Omega| \})$.
\end{lemma}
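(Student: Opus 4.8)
The plan is to count the storage contributions of a directional cluster basis $V$ of rank $k$ separately for the leaf matrices $V_{tc}\in\bbbc^{\hat t\times k}$ and for the transfer matrices $E_{t'c}\in\bbbc^{k\times k}$, and then bound each sum using the estimates already available. For a leaf $t\in\lfI$ and a direction $c\in\mathcal{D}_t$, the matrix $V_{tc}$ costs $\#\hat t\cdot k$ units; for a non-leaf $t\in\ctI$, every son $t'\in\sons(t)$ and every direction $c\in\mathcal{D}_t$ contributes one transfer matrix $E_{t'c}$ costing $k^2$ units. So the total is
\begin{equation*}
  \sum_{t\in\lfI} \#\mathcal{D}_t\, k\, \#\hat t
  + \sum_{t\in\ctI} \#\sons(t)\, \#\mathcal{D}_t\, k^2 .
\end{equation*}
The essential point is that $\#\mathcal{D}_t$ — the number of directions needed on level $\level(t)$ — is $\mathcal{O}(1)$ when $\Csb\kappa\diam(B_t)\le 1$ and $\mathcal{O}(\kappa^2\diam^2(B_t))$ otherwise; this is exactly the dichotomy appearing in the Sparsity Lemma~\ref{le:sparsity}, and indeed $\#\mathcal{D}_t$ is controlled by $\#\brow(t)$ (directions only matter for clusters that actually occur in admissible blocks), so I can reuse (\ref{eq:sparsity}) to get $\#\mathcal{D}_t\le \Csp\max\{1,\kappa^2\diam^2(B_t)\}$.

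First I would handle the leaf term. By (\ref{eq:lowfreq_leaves}) we have $\Csb\kappa\diam(B_t)\le 1$ for every leaf, hence $\#\mathcal{D}_t\le\Csp$, and therefore
\begin{equation*}
  \sum_{t\in\lfI} \#\mathcal{D}_t\, k\, \#\hat t
  \le \Csp k \sum_{t\in\lfI} \#\hat t
  = \Csp k\, \#\Idx ,
\end{equation*}
using that the leaf index sets partition $\Idx$ (Definition~\ref{de:cluster}). This already produces the $k\#\Idx$ part of the claimed bound.

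Next I would handle the transfer term. Split $\ctI$ into the ``low-frequency'' clusters $\mathcal{L}:=\{t\in\ctI:\Csb\kappa\diam(B_t)\le 1\}$ and the ``high-frequency'' clusters $\mathcal{H}:=\ctI\setminus\mathcal{L}$. For $t\in\mathcal{L}$ we get $\#\mathcal{D}_t\le\Csp$ and $\#\sons(t)\le\Csn$, so this part is bounded by $\Csp\Csn k^2\#\ctI\le \Csp\Csn\Clv k\#\Idx$ by (\ref{eq:cluster_bound}), again feeding into the $k\#\Idx$ term. For $t\in\mathcal{H}$ we use $\#\mathcal{D}_t\le\Csp\kappa^2\diam^2(B_t)$ and $\#\sons(t)\le\Csn$, and group by levels $\ell=0,\dots,p_\Idx$; on level $\ell$ all boxes are translation-equivalent with $\diam(B_t)=\diam(B_\ell)$, so Lemma~\ref{le:clusters}, (\ref{eq:level_bound}), gives $\#\ctIl{\ell}\le\Clv|\Omega|/\diam^2(B_\ell)$, and the level-$\ell$ contribution is at most $\Csn\cdot\Csp\kappa^2\diam^2(B_\ell)\cdot\Clv|\Omega|/\diam^2(B_\ell)\cdot k^2 = \Csn\Csp\Clv|\Omega|\kappa^2 k^2$, a quantity independent of $\ell$. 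Summing over the at most $p_\Idx+1$ levels (or just $p_\Idx$ levels since level $0$ has a single box far below the high-frequency threshold when $p_\Idx>0$) yields $\mathcal{O}(p_\Idx k^2\kappa^2)$. Collecting all four pieces and absorbing the constants into $\Ccb = 2\Csp(1+\Csn\Clv\max\{1,|\Omega|\})$ finishes the proof.

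The main obstacle is the bookkeeping around $\#\mathcal{D}_t$: one must justify that the number of \emph{relevant} directions at a cluster is governed by (\ref{eq:sparsity}) rather than by the a priori size of the global set $\mathcal{D}_\ell$ constructed in Section~\ref{se:admissibility}. The cleanest route is to observe that a direction $c$ only contributes a nonzero matrix $V_{tc}$ (and hence needs storing) if $t$ participates in an admissible block whose block direction, pulled back along the son mappings $\sd{\ell}$, equals $c$; since each such block contributes at most one direction and $\#\brow(t)$ is bounded by (\ref{eq:sparsity}), the count follows. The rest is the routine level-by-level summation above, where the key cancellation $\diam^2(B_\ell)\cdot 1/\diam^2(B_\ell)=1$ makes the per-level bound uniform and the dependence on $p_\Idx$ linear.
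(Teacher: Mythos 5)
Your proposal is correct and follows essentially the same route as the paper: split the storage into leaf matrices $V_{tc}$ and transfer matrices $E_{t'c}$, bound the number of stored directions per cluster by the sparsity estimate (\ref{eq:sparsity}), use the disjointness of leaf index sets for the $k\,\#\Idx$ term, and split the transfer-matrix sum into low- and high-frequency clusters, bounding the latter level by level via (\ref{eq:level_bound}) so that $\diam^2(B_\ell)$ cancels. The only slip is your parenthetical justification for counting $p_\Idx$ rather than $p_\Idx+1$ levels (the root box is the \emph{largest}, hence the most likely to be high-frequency); the correct reason, used in the paper, is that transfer matrices are stored only for non-leaf clusters, which occupy levels $0,\dots,p_\Idx-1$ — and in any case $p_\Idx+1\le 2p_\Idx$ for $p_\Idx>0$, so the stated constant $\Ccb$ still suffices.
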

\begin{proof}
Let $t\in\ctI$.
Since we only have to store matrices for directions $c\in\mathcal{D}_t$
that are actually used, we have no more than $\#\brow(t)+\#\bcol(t)$
directions to consider, and Lemma~\ref{le:sparsity} gives us
\begin{equation}\label{eq:direction_bound}
  \#\mathcal{D}_t
  \leq \begin{cases}
         2 \Csp &\text{ if } \Csb \kappa \diam(B_t)\leq 1,\\
         2 \Csp \kappa^2 \diam^2(B_t) &\text{ otherwise}.
       \end{cases}
\end{equation}
First consider the case that $t$ is a leaf.
We store $V_{tc}\in\bbbc^{\hat t\times k}$ for each direction $c\in\mathcal{D}_t$.
Due to (\ref{eq:lowfreq_leaves}), we have $\Csb\kappa \diam(B_t) \leq 1$
and (\ref{eq:direction_bound}) yields $\#\mathcal{D}_t \leq 2 \Csp$.
Definition~\ref{de:cluster} implies that the index sets $\hat t$
of leaf clusters are pairwise disjoint, and we obtain the bound
\begin{align*}
  \sum_{t\in\lfI} \sum_{c\in\mathcal{D}_t} (\#\hat t) k
  &\leq 2 \Csp \sum_{t\in\lfI} (\#\hat t) k
   = 2 \Csp k \sum_{t\in\lfI} \#\hat t
   = 2 \Csp k \#\bigcup_{t\in\lfI} \hat t
   = 2 \Csp k \#\Idx
\end{align*}
for the storage requirements of all leaf matrices.

If $t$ is not a leaf, we store $E_{t'c}\in\bbbc^{k\times k}$ for
each son $t'\in\sons(t)$ and each direction $c\in\mathcal{D}_t$.
Due to Lemma~\ref{le:clusters} and (\ref{eq:direction_bound}), this
requires not more than
\begin{align*}
  \sum_{\substack{t\in\ctI,\\\sons(t)\neq\emptyset}}
      \sum_{t'\in\sons(t)} \sum_{c\in\mathcal{D}_t} k^2
  &\leq \Csn k^2 \sum_{t\in\ctI\setminus\lfI} \#\mathcal{D}_t\\
  &= \Csn k^2 
     \sum_{\substack{t\in\ctI\setminus\lfI\\ \Csb \kappa \diam(B_t)\leq 1}} \#\mathcal{D}_t
   + \Csn k^2
     \sum_{\substack{t\in\ctI\setminus\lfI\\ \Csb \kappa \diam(B_t)>1}} \#\mathcal{D}_t\\
  &\leq \Csn k^2 2 \Csp \#\ctI
     + \Csn k^2
     \sum_{\ell=0}^{p_\Idx-1}
     \sum_{t\in\ctIl{\ell}} 2 \Csp \kappa^2 \diam^2(B_\ell)\\
  &\leq 2 \Csp \Csn \Clv k \#\Idx
     + 2 \Csp \Csn k^2 p_\Idx \kappa^2 \Clv |\Omega|\\
  &\leq 2 \Csp \Csn \Clv \max\{1,|\Omega|\}
        (k \#\Idx + p_\Idx k^2 \kappa^2).
\end{align*}
Combining the estimates for leaf and non-leaf cluster gives us the
desired estimate.
\end{proof}

%
%
\begin{lemma}[Nearfield and coupling matrices]
\label{le:storage_blocks}
Let (\ref{eq:bbox_similar}), (\ref{eq:sons_bound}), (\ref{eq:planar}),
(\ref{eq:overlap}), (\ref{eq:leaves_bound}) and (\ref{eq:cluster_uniform})
hold.
Nearfield and coupling matrices require not more than
\begin{equation*}
  \Cnc (k \#\Idx + (p_{\Idx} + 1) k^2 \kappa^2)
  \text{ units of storage,}
\end{equation*}
where $\Cnc := \Clb \Csp \Clv \max\{ 1, |\Omega| \}$
and $\Clb := \max\{ 1, \Crs^2 \Cun \}$.
\end{lemma}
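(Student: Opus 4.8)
The plan is to estimate the two contributions separately, exactly mirroring the structure of the proof of Lemma~\ref{le:storage_basis}. For the \emph{coupling matrices} $S_b\in\bbbc^{k\times k}$, $b=(t,s)\in\lfaII$, I would bound their number by $\sum_{t\in\ctI}\#\brow(t)$. Splitting the sum into clusters with $\Csb\kappa\diam(B_t)\le 1$ and the rest, I would use Lemma~\ref{le:sparsity}: the first group contributes at most $\Csp\#\ctI\le\Csp\Clv\#\Idx/k$ via (\ref{eq:cluster_bound}), hence at most $\Csp\Clv k\#\Idx$ units of storage; the second group contributes $\sum_{\ell=0}^{p_\Idx}\sum_{t\in\ctIl{\ell}}\Csp\kappa^2\diam^2(B_\ell)$, which by (\ref{eq:level_bound}) in Lemma~\ref{le:clusters} is at most $\Csp\Clv\kappa^2|\Omega|(p_\Idx+1)$, giving at most $\Csp\Clv|\Omega|(p_\Idx+1)k^2\kappa^2$ units of storage. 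Together this is already of the claimed form, up to the factor $\Clb\ge 1$.

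For the \emph{nearfield matrices} $G|_{\hat t\times\hat s}$, $b=(t,s)\in\lfiII$, each requires $(\#\hat t)(\#\hat s)$ units of storage, and the key point is that inadmissible leaves live on the bottom of the tree where both clusters are small. Since $b$ is a leaf of $\ctII$, at least one of $t,s$ is a leaf of $\ctI$; by symmetry of the admissibility condition I may assume $t\in\lfI$, so $\#\hat t\le\Crs k$ by (\ref{eq:rank_leaves}). Because $b$ is inadmissible, (\ref{eq:adm_standard}) fails, i.e.\ $\eta_2\dist(B_t,B_s)<\diam(B_t)$, and now the mesh-regularity assumption (\ref{eq:cluster_uniform}) applies (here $\level(t)=\level(s)$ since blocks pair clusters on the same level), giving $\#\hat s\le\Cun\#\hat t\le\Cun\Crs k$. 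Hence $(\#\hat t)(\#\hat s)\le\Crs^2\Cun k^2\le\Clb k^2$, and each nearfield block costs at most $\Clb k^2$ units.

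It remains to bound the \emph{number} of inadmissible leaves. I would write $\#\lfiII\le\sum_{t\in\lfI}\#\brow(t)+\sum_{s\in\lfI}\#\bcol(s)=2\sum_{t\in\lfI}\#\brow(t)$ (using that every inadmissible leaf has a leaf cluster in at least one coordinate, and $\#\brow=\#\bcol$), and then split this sum once more according to whether $\Csb\kappa\diam(B_t)\le1$. By (\ref{eq:lowfreq_leaves}) \emph{all} leaves satisfy $\Csb\kappa\diam(B_t)\le1$, so Lemma~\ref{le:sparsity} gives $\#\brow(t)\le\Csp$ for every leaf, whence $\#\lfiII\le 2\Csp\#\lfI\le 2\Csp\Crs\#\Idx/k$ by the leaf bound established in the proof of Lemma~\ref{le:clusters}; multiplying by $\Clb k^2$ yields at most $2\Csp\Crs\Clb k\#\Idx$ units of storage for all nearfield matrices, which is again of the claimed form. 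Adding the coupling and nearfield contributions and collecting the constants into $\Cnc=\Clb\Csp\Clv\max\{1,|\Omega|\}$ (absorbing the harmless numerical factors, using $\Clv\ge 2\Crs$ and $\max\{1,|\Omega|\}\ge1$) completes the proof.

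The routine part is the repeated level-by-level summation against (\ref{eq:level_bound}); the one genuinely substantive step is the nearfield size bound, where one must correctly invoke (\ref{eq:cluster_uniform}) — in particular checking that an inadmissible leaf block really does force the hypothesis $\eta_2\dist(B_t,B_s)<\diam(B_t)$ and that the two clusters sit on the same level so that (\ref{eq:cluster_uniform}) is applicable. The only mild subtlety beyond that is making sure the constant $\Cnc$ as stated is large enough to swallow the factors $2\Csp\Crs$ and the like; this is where I would be careful, though it is ultimately bookkeeping.
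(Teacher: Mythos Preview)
Your proposal is correct and close to the paper's proof, but the paper is slightly more economical: instead of treating coupling and nearfield blocks separately, it observes that \emph{every} leaf block $b=(t,s)\in\lfII$ costs at most $\Clb k^2$ units (admissible ones cost $k^2\le\Clb k^2$, inadmissible ones cost $\le\Crs^2\Cun k^2\le\Clb k^2$), and then bounds the total number of leaf blocks by $\sum_{t\in\ctI}\#\brow(t)$ in one stroke, splitting this single sum into low- and high-frequency parts exactly as you do. This unified counting yields the stated constant $\Cnc$ on the nose, whereas your separate treatment naturally produces an extra additive term and lands on roughly $2\Cnc$ --- precisely the bookkeeping issue you flag at the end.

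One small point worth tightening: when you write ``because $b$ is inadmissible, (\ref{eq:adm_standard}) fails'', this is not immediate, since inadmissibility only means that (\ref{eq:adm_parabolic}) \emph{or} (\ref{eq:adm_standard}) fails. You should note that since $t\in\lfI$ gives $\kappa\diam(B_t)\le 1$ via (\ref{eq:lowfreq_leaves}), failure of (\ref{eq:adm_parabolic}) forces failure of (\ref{eq:adm_standard}) as well, so the hypothesis of (\ref{eq:cluster_uniform}) is indeed satisfied. (The paper's own proof actually glosses over this verification, so you are being more careful here.)
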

\begin{proof}
Let $b=(t,s)\in\lfII$.

If $b$ is admissible, we store the $k\times k$ coupling matrix $S_b$
in $k^2$ units of storage.

If $b$ is not admissible, we store the matrix $G|_{\hat t\times\hat s}$.
Due to our construction, an inadmissible block can only appear if
$t$ or $s$ is a leaf.
Without loss of generality we assume that $t$ is a leaf, and
(\ref{eq:rank_leaves}) yields $\#\hat t\leq \Crs k$.
Our construction also guarantees $\level(t)=\level(s)$, so
(\ref{eq:cluster_uniform}) gives us
\begin{equation*}
   \#\hat s\leq \Cun \#\hat t \leq \Cun \Crs k,
\end{equation*}
and we conclude that the nearfield matrix requires not more than
$\Crs^2 \Cun k^2$ units of storage.

Since each leaf block takes not more than $\Clb k^2$ units
of storage, the total storage requirements are bounded by
\begin{equation}\label{eq:storage_matrices_1}
  \sum_{b=(t,s)\in\lfII} \Clb k^2
  \leq \sum_{t\in\ctI} \sum_{s\in\brow(t)} \Clb k^2
  = \Clb k^2 \sum_{t\in\ctI} \#\brow(t).
\end{equation}
Let $t\in\ctI$.
If $\Csb \kappa \diam(B_t) \leq 1$, we have
$\#\brow(t)\leq \Csp$ due to (\ref{eq:sparsity}).
Otherwise we have $\#\brow(t)\leq \Csp \kappa^2 \diam^2(B_t)$.
Combining both estimates with (\ref{eq:level_bound}) and
(\ref{eq:cluster_bound}) yields
\begin{align*}
  \sum_{t\in\ctI} \#\brow(t)
  &= \sum_{\substack{t\in\ctI\\ \Csb \kappa \diam(B_t)\leq 1}} \#\brow(t)
     + \sum_{\substack{t\in\ctI\\ \Csb \kappa \diam(B_t)>1}} \#\brow(t)\\
  &\leq \sum_{t\in\ctI} \Csp
     + \sum_{\ell=0}^{p_\Idx} \sum_{t\in\ctIl{\ell}}
          \Csp \kappa^2 \diam^2(B_t)
   \leq \Csp \#\ctI
     + \Csp \Clv \kappa^2 \sum_{\ell=0}^{p_\Idx} |\Omega|\\
  &\leq \frac{\Csp \Clv}{k} \#\Idx
     + \Csp \Clv \kappa^2 (p_\Idx+1) |\Omega|,
\end{align*}
and with (\ref{eq:storage_matrices_1}) we obtain the desired result.
\end{proof}

%
%
\begin{theorem}[Complexity]
\label{th:complexity}
Let (\ref{eq:bbox_similar}), (\ref{eq:sons_bound}), (\ref{eq:planar}),
(\ref{eq:overlap}), (\ref{eq:leaves_bound}) and (\ref{eq:cluster_uniform})
hold.

A $\mathcal{DH}^2$-matrix representation of a matrix
$G\in\bbbc^{\Idx\times\Idx}$ requires not more than
\begin{align*}
  \Cdh (k \#\Idx + (p_\Idx+1) k^2 \kappa^2)
    &\text{ units of storage}
\intertext{and a matrix-vector multiplication requires not more than}
  2 \Cdh (k \#\Idx + (p_\Idx+1) k^2 \kappa^2)
    &\text{ operations,}
\end{align*}
where $\Cdh := \Cnc + \Ccb$.
\end{theorem}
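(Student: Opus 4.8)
The plan is to assemble Theorem~\ref{th:complexity} directly from the three preceding lemmas, since all the hard geometric and combinatorial work has already been done. First I would observe that a $\mathcal{DH}^2$-matrix representation consists of exactly four ingredients: the row cluster basis $V$, the column cluster basis $W$, the coupling matrices $S=(S_b)_{b\in\lfaII}$, and the nearfield matrices $(G|_{\hat b})_{b\in\lfiII}$. The storage for $V$ and $W$ is each bounded by Lemma~\ref{le:storage_basis}, i.e. by $\Ccb(k\,\#\Idx + p_\Idx k^2\kappa^2)$; the storage for $S$ together with the nearfield matrices is bounded by Lemma~\ref{le:storage_blocks}, i.e. by $\Cnc(k\,\#\Idx + (p_\Idx+1)k^2\kappa^2)$. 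Adding these three contributions and bounding $p_\Idx\leq p_\Idx+1$ in the cluster-basis terms yields a total of at most $(2\Ccb+\Cnc)(k\,\#\Idx+(p_\Idx+1)k^2\kappa^2)$. To match the stated constant $\Cdh=\Cnc+\Ccb$, I would note that the coefficient $2\Ccb+\Cnc$ is what one literally gets from two cluster bases; presumably the paper intends $\Cdh$ to absorb the factor, so I would either write the bound with constant $2\Ccb+\Cnc\le 2\Cdh$ or simply remark that one may redefine $\Ccb$ to already account for both bases --- in any case the claimed form with a single constant $\Cdh$ follows.

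For the operation count of the matrix-vector multiplication, I would invoke the discussion at the end of Section~4: the forward transformation applies each $W_{sc}$ (at leaves) and each $E_{s'c}$ (at non-leaves) exactly once; the coupling step applies each $S_b$ exactly once; the backward transformation applies each $V_{tc}$ and each $E_{t'c}$ exactly once; and the nearfield step applies each $G|_{\hat b}$ for $b\in\lfiII$ exactly once. Thus the total number of arithmetic operations is bounded, up to a factor of $2$ (one multiplication and one addition per matrix entry touched), by the total number of entries stored in the $\mathcal{DH}^2$-representation --- which is exactly the storage bound just established. This gives the factor $2\Cdh$ in front.

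The only genuinely delicate point --- and the step I would flag as the main obstacle --- is bookkeeping the constants and the $p_\Idx$ versus $p_\Idx+1$ discrepancy so that the final bound has precisely the advertised shape with the single constant $\Cdh=\Cnc+\Ccb$ rather than $\Cnc+2\Ccb$. Everything else is a one-line summation. I would therefore structure the proof as: (i) recall the four components of the representation; (ii) cite Lemma~\ref{le:storage_basis} for $V$ and for $W$ and Lemma~\ref{le:storage_blocks} for $S$ and the nearfield, using $p_\Idx\le p_\Idx+1$ to unify the polynomial-degree factor; (iii) sum to obtain the storage estimate with constant at most $2\Cdh$; (iv) invoke the ``one matrix-vector product per stored matrix'' observation from Section~4 to transfer the storage bound to an operation bound with an extra factor $2$. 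I expect no new inequalities are needed beyond those already proved.
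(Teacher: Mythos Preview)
Your proposal is correct and follows exactly the paper's approach: the paper's entire proof reads ``Combine Lemma~\ref{le:storage_basis} and Lemma~\ref{le:storage_blocks},'' together with the observation at the end of Section~4 that the matrix-vector multiplication touches each stored matrix once. Your concern about the constant is well-founded --- a literal count of two cluster bases plus the block matrices gives $2\Ccb+\Cnc$ rather than $\Ccb+\Cnc$ --- but the paper itself does not address this and simply declares $\Cdh=\Cnc+\Ccb$, so you are being more careful than the source.
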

\begin{proof}
Combine Lemma~\ref{le:storage_basis} and Lemma~\ref{le:storage_blocks}.
\end{proof}

%
%
\begin{remark}[Asymptotic complexity]
Let $n:=\#\Idx$ denote the matrix dimension.
In order to resolve waves of wavelength $\sim 1/\kappa$, we typically
have to choose $n\sim\kappa^2$.

Standard cluster algorithms ensure $p_\Idx\sim\log(n)$ for regular
meshes, so Theorem~\ref{th:complexity} states that storage requirements
and computational complexity are in $\mathcal{O}(n k^2 \log(n))$.
\end{remark}

As mentioned before, using a representation that reduces
to standard $\mathcal{H}^2$-matrices in the low-frequency regime offers
that advantage of obtaining linear complexity, while the approach
presented in \cite{BEKUVE15} switches to $\mathcal{H}$-matrices and
therefore can reach only linear-logarithmic complexity.

Our complexity result is comparable to the ones obtained in
\cite{MESCDA12}, but we can expect significantly lower ranks $k$,
since we are free to apply SVD-based quasi-optimal compression to
all matrices, not only to the coupling (or M2L) matrices $S_b$.

%
%
\section{Compression algorithm}

We have introduced a matrix representation that matches the
approximation scheme described in \cite{MESCDA12}, and we have
proven that it can be efficient if the rank $k$ is small.
Our goal is now to develop an algorithm that can approximate
an \emph{arbitrary} matrix by an $\mathcal{DH}^2$-matrix, since
it could lead the way to efficient recompression schemes or
preconditioners.

Given a matrix $G\in\bbbc^{\Idx\times\Idx}$, a cluster tree $\ctI$,
a family $(\mathcal{D}_t)_{t\in\ctI}$ of hierarchical directions,
and a block tree $\ctII$ with admissible leaves $\lfaII$ and
inadmissible leaves $\lfiII$, we are looking for an algorithm
that constructs a $\mathcal{DH}^2$-matrix approximation of $G$
with a prescribed accuracy.
Our approach is to extend the algorithm introduced in \cite{BOHA02}
to fit the more general structure of $\mathcal{DH}^2$-matrices.

For the sake of numerical stability and efficiency, we focus
on orthogonal directional cluster bases.

%
%
\begin{definition}[Orthogonality]
We call a directional cluster basis $Q=(Q_{tc})_{t\in\ctI,c\in\mathcal{D}_t}$
\emph{orthogonal} if
\begin{align*}
  Q_{tc}^* Q_{tc} &= I &
  &\text{ for all } t\in\ctI,\ c\in\mathcal{D}_t,
\end{align*}
i.e., if the columns of each matrix $Q_{tc}$ are an orthonormal
basis of its range.
\end{definition}

If $V$ and $W$ are orthogonal directional cluster bases,
$V_{tc} V_{tc}^*$ and $W_{sc} W_{sc}^*$ are orthogonal projections,
and for $b=(t,s)\in\lfaII$, $c=c_b$ the matrix
\begin{align*}
  V_{tc} V_{tc}^* G|_{\hat t\times\hat s} W_{sc} W_{sc}^*
  &= V_{tc} \widehat{S}_b W_{sc}^*, &
  \widehat{S}_b &:= V_{tc}^* G|_{\hat t\times\hat s} W_{sc}
\end{align*}
is the best approximation of $G|_{\hat t\times\hat s}$ of the
shape (\ref{eq:matrix_apx}) with respect to the Frobenius norm
and close to the best approximation with respect to the spectral norm.

For the spectral norm, we find
\begin{align*}
  \|G|_{\hat t\times\hat s}
    &- V_{tc} V_{tc}^* G|_{\hat t\times\hat s} W_{sc} W_{sc}^*\|^2\\
  &\leq \|G|_{\hat t\times\hat s}
          - V_{tc} V_{tc}^* G|_{\hat t\times\hat s}\|^2
      + \|V_{tc} V_{tc}^* (G|_{\hat t\times\hat s}
            - G|_{\hat t\times\hat s} W_{sc} W_{sc}^*) \|^2\\
  &\leq \|G|_{\hat t\times\hat s}
          - V_{tc} V_{tc}^* G|_{\hat t\times\hat s}\|^2
      + \|G|_{\hat t\times\hat s}^*
          - W_{sc} W_{sc}^* G|_{\hat t\times\hat s}^*\|^2,
\end{align*}
so we can focus on the construction of the row basis, since
the column basis can be obtained by applying our procedure
to the adjoint matrix $G^*$.

We denote the new directional cluster basis by
$Q=(Q_{tc})_{t\in\ctI,c\in\mathcal{D}_t}$.
Due to the nested structure (\ref{eq:nested}), a cluster basis
matrix $Q_{tc}$ not only has to be able to approximate
submatrices $G|_{\hat t\times\hat s}$ for $b=(t,s)\in\lfaII$,
but also submatrices corresponding to ancestors of $t$.
We define the sets of \emph{descendants} of clusters and directions
inductively as
\begin{align*}
  \desc(t) &:= \begin{cases}
    \{t\} &\text{ if } \sons(t)=\emptyset,\\
    \{t\} \cup \bigcup_{t'\in\sons(t)} \desc(t') &\text{ otherwise}
  \end{cases} &
  &\text{ for all } t\in\ctI,\\
  \descd{t}(c) &:= \begin{cases}
    \{c\} &\text{ if } \sons(t)=\emptyset,\\
    \{c\} \cup \descd{t'}(\sd{t}(c)) &\text{ otherwise, with } t'\in\sons(t)
  \end{cases} &
  &\text{ for all } t\in\ctI,\ c\in\mathcal{D}_t,
\end{align*}
and collect the submatrices that have to be approximated by $Q_{tc}$ in
the matrices
\begin{align*}
  G_{tc} &:= G|_{\hat t\times \mathcal{F}_{tc}},\\
  F_{tc} &:= \{ s\in\ctI\ :\ \exists t^+\in\ctI\ :\ 
              t\in\desc(t^+),\ b=(t^+,s)\in\lfaII,\ c\in\descd{t^+}(c_b) \},\\
  \mathcal{F}_{tc} &:= \bigcup \{ \hat s\ :\ s\in F_{tc} \}
       \qquad\text{ for all } t\in\ctI,\ c\in\mathcal{D}_t.
\end{align*}
For each $t\in\ctI$ and $c\in\mathcal{D}_t$, we have to find
a matrix $Q_{tc}$ of low rank such that
\begin{equation}\label{eq:approx_Gtc}
  \|G_{tc} - Q_{tc} Q_{tc}^* G_{tc}\|
  \leq \epsilon_t
\end{equation}
holds for a suitable $\epsilon_t>0$.

If $t$ is a leaf of $\ctI$, we can solve this problem by computing
the singular value decomposition of $G_{tc}$ and using the first
$k$ left singular vectors as the columns of $Q_{tc}$.
If we assume that leaf clusters correspond to only small sets of
indices, this procedure is quite efficient.

If $t$ is not a leaf, we have to take (\ref{eq:nested}) into account.
For the sake of simplicity we will only consider the case
$\sons(t)=\{t_1,t_2\}$ with $t_1\neq t_2$.
Definition~\ref{de:directions} gives us a ``son direction''
$c':=\sd{t}(c)\in\mathcal{D}_{t_1}=\mathcal{D}_{t_2}$, and we have to find
transfer matrices $E_{t_1c}$ and $E_{t_2c}$ such that
\begin{align*}
  \|G_{tc} - Q_{tc} Q_{tc}^* G_{tc}\|
  &\leq \epsilon, &
  Q_{tc}
  &= \begin{pmatrix}
       Q_{t_1c'} E_{t_1c}\\
       Q_{t_2c'} E_{t_2c}
     \end{pmatrix}.
\end{align*}
Substituting $Q_{tc}$ in the left inequality and using
Pythagoras' equation yields
\begin{align*}
  \|G_{tc} &- Q_{tc} Q_{tc}^* G_{tc}\|^2
   = \left\| \begin{pmatrix}
               G|_{\hat t_1\times\mathcal{F}_{tc}}\\
               G|_{\hat t_2\times\mathcal{F}_{tc}}
             \end{pmatrix}
             - \begin{pmatrix}
               Q_{t_1c'} E_{t_1c} Q_{tc}^* G_{tc}\\
               Q_{t_2c'} E_{t_2c} Q_{tc}^* G_{tc}
             \end{pmatrix} \right\|^2\\
  &= \|G|_{\hat t_1\times\mathcal{F}_{tc}}
       - Q_{t_1c'} E_{t_1c} Q_{tc}^* G_{tc}\|^2
     + \|G|_{\hat t_2\times\mathcal{F}_{tc}}
         - Q_{t_2c'} E_{t_2c} Q_{tc}^* G_{tc}\|^2\\
  &= \|G|_{\hat t_1\times\mathcal{F}_{tc}}
       - Q_{t_1c'} Q_{t_1c'}^* G|_{\hat t_1\times\mathcal{F}_{tc}}
       + Q_{t_1c'} (Q_{t_1c'}^* G|_{\hat t_1\times\mathcal{F}_{tc}}
                  - E_{t_1c} Q_{tc}^* G_{tc})\|^2\\
  &\quad + \|G|_{\hat t_2\times\mathcal{F}_{tc}}
       - Q_{t_2c'} Q_{t_2c'}^* G|_{\hat t_2\times\mathcal{F}_{tc}}
       + Q_{t_2c'} (Q_{t_2c'}^* G|_{\hat t_2\times\mathcal{F}_{tc}}
                  - E_{t_2c} Q_{tc}^* G_{tc})\|^2\\
  &= \|G|_{\hat t_1\times\mathcal{F}_{tc}}
         - Q_{t_1c'} Q_{t_1c'}^* G|_{\hat t_1\times\mathcal{F}_{tc}}\|^2
     + \|Q_{t_1c'}(Q_{t_1c'}^* G|_{\hat t_1\times\mathcal{F}_{tc}}
              - E_{t_1c} Q_{tc}^* G_{tc}\|^2\\
  &\quad + \|G|_{\hat t_2\times\mathcal{F}_{tc}}
         - Q_{t_2c'} Q_{t_2c'}^* G|_{\hat t_2\times\mathcal{F}_{tc}}\|^2
     + \|Q_{t_2c'}(Q_{t_2c'}^* G|_{\hat t_2\times\mathcal{F}_{tc}}
              - E_{t_2c} Q_{tc}^* G_{tc}\|^2\\
  &= \|G|_{\hat t_1\times\mathcal{F}_{tc}}
         - Q_{t_1c'} Q_{t_1c'}^* G|_{\hat t_1\times\mathcal{F}_{tc}}\|^2
   + \|G|_{\hat t_2\times\mathcal{F}_{tc}}
         - Q_{t_2c'} Q_{t_2c'}^* G|_{\hat t_2\times\mathcal{F}_{tc}}\|^2\\
  &\quad + \left\| \begin{pmatrix}
         Q_{t_1c'}^* G|_{\hat t_1\times\mathcal{F}_{tc}}\\
         Q_{t_2c'}^* G|_{\hat t_2\times\mathcal{F}_{tc}}
       \end{pmatrix}
       - \begin{pmatrix}
         E_{t_1c}\\ E_{t_2c}
       \end{pmatrix}
       \begin{pmatrix}
         E_{t_1c}\\ E_{t_2c}
       \end{pmatrix}^*
       \begin{pmatrix}
         Q_{t_1c'}^* G|_{\hat t_1\times\mathcal{F}_{tc}}\\
         Q_{t_2c'}^* G|_{\hat t_2\times\mathcal{F}_{tc}}
       \end{pmatrix} \right\|^2.
\end{align*}
We introduce the auxiliary matrices
\begin{align}\label{eq:hatQ_hatG}
  \widehat{Q}_{tc}
  &:= \begin{pmatrix} E_{t_1c}\\ E_{t_2c} \end{pmatrix}, &
  \widehat{G}_{tc}
  &:= \begin{pmatrix}
        Q_{t_1c'}^* G|_{\hat t_1\times\mathcal{F}_{tc}}\\
        Q_{t_2c'}^* G|_{\hat t_2\times\mathcal{F}_{tc}}
      \end{pmatrix}
\end{align}
and obtain
\begin{align}
  \|G_{tc} - Q_{tc} Q_{tc}^* G_{tc}\|^2
  &= \|G|_{\hat t_1\times\mathcal{F}_{tc}}
        - Q_{t_1c'} Q_{t_1c'}^* G|_{\hat t_1\times\mathcal{F}_{tc}}\|^2\notag\\
  &\quad+ \|G|_{\hat t_2\times\mathcal{F}_{tc}}
        - Q_{t_2c'} Q_{t_2c'}^* G|_{\hat t_2\times\mathcal{F}_{tc}}\|^2\notag\\
  &\quad+ \|\widehat{G}_{tc} - \widehat{Q}_{tc} \widehat{Q}_{tc}^*
          \widehat{G}_{tc}\|^2.\label{eq:error_nonleaf}
\end{align}
We can see that the first two terms on the right-hand side of the
equation only depend on $G$ and the already fixed matrices
$Q_{t_1c'}$ and $Q_{t_2c'}$.
If we assume that these matrices have been chosen appropriately,
we only have to find $\widehat{Q}_{tc}$ such that
\begin{equation}\label{eq:approx_hatGtc}
  \|\widehat{G}_{tc} - \widehat{Q}_{tc} \widehat{Q}_{tc}^*
       \widehat{G}_{tc}\|
  \leq \epsilon_t
\end{equation}
holds for a given accuracy $\epsilon_t>0$.
This problem can again be solved by computing the singular value
decomposition of $\widehat{G}_{tc}$ and using the first $k$ left
singular vectors as the columns of $\widehat{Q}_{tc}$.
Splitting the matrix according to (\ref{eq:hatQ_hatG}) gives us
the required transfer matrices $E_{t_1c}$ and $E_{t_2c}$.
Since $\widehat{G}_{tc}$ has only $2k$ rows, this procedure is
efficient as long as the rank $k$ is not too high.

In order to compute $\widehat{G}_{tc}$ efficiently, i.e., without
going back to the original matrix $G|_{\hat t\times\mathcal{F}_{tc}}$,
we introduce the auxiliary matrices
\begin{align*}
  R_{tc} &:= Q_{tc}^* G|_{\hat t\times\mathcal{F}_{tc}} &
  &\text{ for all } t\in\ctI,\ c\in\mathcal{D}_t
\end{align*}
and observe that $\mathcal{F}_{tc}\subseteq\mathcal{F}_{t_1,c'},
\mathcal{F}_{t_2,c'}$ yields
\begin{equation*}
  \widehat{G}_{tc}
  = \begin{pmatrix}
      R_{t_1c'}|_{k\times\mathcal{F}_{tc}}\\
      R_{t_2c'}|_{k\times\mathcal{F}_{tc}}
    \end{pmatrix},
\end{equation*}
i.e., we can construct $\widehat{G}_{tc}$ efficiently by copying
suitable submatrices of $R_{t_1c'}$ and $R_{t_2c'}$.
If $t$ is a leaf, we can compute $R_{tc}$ by definition, since we
can assume that $\hat t$ is small.
If $t$ is not a leaf, we can use
\begin{equation*}
  R_{tc}
  = Q_{tc}^* G|_{\hat t\times\mathcal{F}_{tc}}
  = \widehat{Q}_{tc}^*
    \begin{pmatrix}
      Q_{t_1c'}^* G|_{\hat t_1\times\mathcal{F}_{tc}}\\
      Q_{t_2c'}^* G|_{\hat t_2\times\mathcal{F}_{tc}}
    \end{pmatrix}
  = \widehat{Q}_{tc}^* \widehat{G}_{tc}
\end{equation*}
to obtain the matrix efficiently.
The resulting algorithm is summarized in Figure~\ref{fi:dh2basis}.

%
%
\begin{figure}
  \begin{tabbing}
    \textbf{procedure} basis($t$, $G$,
                          \textbf{var} $R=(R_{tc})_{t\in\ctI,c\in\mathcal{D}_t}$,
                                    $Q=(Q_{tc})_{t\in\ctI,c\in\mathcal{D}_t}$);\\
    \textbf{if} $\sons(t)=\emptyset$ \textbf{then begin}\\
    \quad\= \textbf{for} $c\in\mathcal{D}_t$ \textbf{do begin}\\
    \> \quad\= Construct $Q_{tc}$ from the first $k$ singular vectors
                  of $G_{tc}$;\\
    \> \> $R_{tc} \gets Q_{tc}^* G_{tc}$\\
    \> \textbf{end}\\
    \textbf{else begin}\\
    \> \textbf{for all} $t'\in\sons(t)$ \textbf{do}
        basis($t'$, $G$, $R$, $Q$);\\
    \> \textbf{for all} $c\in\mathcal{D}_t$ \textbf{do begin}\\
    \> \> $\widehat{G}_{tc} \gets
             \begin{pmatrix}
               R_{t_1c'}|_{k\times\mathcal{F}_{tc}}\\
               R_{t_2c'}|_{k\times\mathcal{F}_{tc}}
             \end{pmatrix}$;\\
    \> \> Construct $\widehat{Q}_{tc}$ from the first $k$ singular vectors
                  of $\widehat{G}_{tc}$;\\
    \> \> Recover $E_{t_1c}$ and $E_{t_2c}$ from $\widehat{Q}_{tc}$;\\
    \> \> $R_{tc} \gets \widehat{Q}_{tc}^* \widehat{G}_{tc}$\\
    \> \textbf{end}\\
    \textbf{end}
  \end{tabbing}
  \caption{Construction of an orthogonal directional cluster basis}
  \label{fi:dh2basis}
\end{figure}

%
%
\begin{theorem}[Error estimate]
\label{th:error_estimate}
If (\ref{eq:approx_Gtc}) holds for all leaf clusters $t\in\lfI$
and (\ref{eq:approx_hatGtc}) holds for all non-leaf clusters
$t\in\ctI\setminus\lfI$, we have
\begin{align*}
  \|G_{tc} - Q_{tc} Q_{tc}^* G_{tc}\|^2
  &\leq \sum_{r\in\desc(t)} \epsilon_r^2 &
  &\text{ for all } t\in\ctI,\ c\in\mathcal{D}_t.
\end{align*}
\end{theorem}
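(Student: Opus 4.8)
The plan is to prove the estimate by structural induction over the cluster tree $\ctI$, working from the leaves towards the root; for a non-leaf cluster $t$ one may then assume the claimed bound for its sons. Following the construction above I treat only $\sons(t)=\{t_1,t_2\}$, the general case being identical once the Pythagoras identity (\ref{eq:error_nonleaf}) is rewritten as a sum over all sons. For the base case, if $t\in\lfI$ then $\desc(t)=\{t\}$, and hypothesis (\ref{eq:approx_Gtc}) gives $\|G_{tc}-Q_{tc}Q_{tc}^*G_{tc}\|\leq\epsilon_t$ for every $c\in\mathcal{D}_t$, so squaring yields exactly the assertion.

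For the inductive step, let $t\in\ctI\setminus\lfI$, fix $c\in\mathcal{D}_t$, and set $c':=\sd{t}(c)$. The identity (\ref{eq:error_nonleaf}) expresses $\|G_{tc}-Q_{tc}Q_{tc}^*G_{tc}\|^2$ as the sum of the two son projection errors $\|G|_{\hat t_i\times\mathcal{F}_{tc}}-Q_{t_ic'}Q_{t_ic'}^*G|_{\hat t_i\times\mathcal{F}_{tc}}\|^2$ for $i\in\{1,2\}$ together with $\|\widehat G_{tc}-\widehat Q_{tc}\widehat Q_{tc}^*\widehat G_{tc}\|^2$, and this last term is at most $\epsilon_t^2$ by (\ref{eq:approx_hatGtc}). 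To handle the first two terms I would invoke the inclusion $\mathcal{F}_{tc}\subseteq\mathcal{F}_{t_ic'}$ observed in the text: thus $G|_{\hat t_i\times\mathcal{F}_{tc}}$ is the restriction of $G_{t_ic'}$ to a subset of its columns, and since $I-Q_{t_ic'}Q_{t_ic'}^*$ acts only on the row index, the matrix $G|_{\hat t_i\times\mathcal{F}_{tc}}-Q_{t_ic'}Q_{t_ic'}^*G|_{\hat t_i\times\mathcal{F}_{tc}}$ is precisely that same column restriction of $G_{t_ic'}-Q_{t_ic'}Q_{t_ic'}^*G_{t_ic'}$. Since passing to a subset of columns cannot increase the spectral (or Frobenius) norm, each of these terms is bounded by $\|G_{t_ic'}-Q_{t_ic'}Q_{t_ic'}^*G_{t_ic'}\|^2$, which by the induction hypothesis applied to $t_i$ and the direction $c'\in\mathcal{D}_{t_i}$ is at most $\sum_{r\in\desc(t_i)}\epsilon_r^2$. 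Adding the three bounds and using that $\desc(t)=\{t\}\cup\desc(t_1)\cup\desc(t_2)$ is a disjoint union (the subtrees below $t_1$ and $t_2$ are disjoint and neither contains $t$) gives $\sum_{r\in\desc(t)}\epsilon_r^2$, which completes the induction.

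The only step needing a little care is this column-deletion monotonicity of the norm combined with the set inclusion $\mathcal{F}_{tc}\subseteq\mathcal{F}_{t_ic'}$; both facts are elementary, and the genuinely substantial computation, the orthogonal (Pythagoras) decomposition (\ref{eq:error_nonleaf}), has already been performed in the text, so I do not expect a real obstacle here.
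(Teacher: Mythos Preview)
Your proof is correct and is exactly the structural induction the paper has in mind; the paper's own proof consists of the single line ``Structural induction using (\ref{eq:error_nonleaf})'', and you have merely spelled out the details, including the column-restriction argument via $\mathcal{F}_{tc}\subseteq\mathcal{F}_{t_ic'}$ that makes the induction hypothesis applicable to the sons.
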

\begin{proof}
Structural induction using (\ref{eq:error_nonleaf}).
\end{proof}

%
%
\begin{remark}[Error control]
\label{re:error_control}
The result of Theorem~\ref{th:error_estimate} holds for individual
submatrices:
for $b=(t,s)\in\lfaII$, we find
\begin{equation*}
  \|G|_{\hat t\times\hat s} - Q_{tc} Q_{tc}^* G|_{\hat t\times\hat s}\|^2
  \leq \sum_{s\in\desc(t)} \epsilon_r^2.
\end{equation*}
Choosing $\epsilon_r \sim \zeta^{\level(r)-\level(t)}$ with
$\zeta < \sqrt{1/\Csn}$ (cf. (\ref{eq:sons_bound})) turns
the right-hand side into a geometric sum that can be bounded
independently of $t$ and $s$.

Refined error control techniques \cite{BO05a} can be implemented
by weighting the submatrices:
let $(\omega_{ts})_{t\in\ctI,s\in F_{tc}}$ be a family of weights
$\omega_{ts}\in\bbbr_{>0}$ and define $G^\omega_{tc}$ by
\begin{align*}
  G^\omega_{tc}|_{\hat t\times\hat s}
  &= \omega_{ts}^{-1} G|_{\hat t\times\hat s} &
  &\text{ for all } t\in\ctI,\ c\in\mathcal{D}_t,\ s\in F_{tc}.
\end{align*}
Replacing $G_{tc}$ in (\ref{eq:approx_Gtc}) and (\ref{eq:hatQ_hatG})
by $G^\omega_{tc}$ leads to
\begin{align*}
  \omega_{ts}^{-2}
   \|G|_{\hat t\times\hat s} - Q_{tc} Q_{tc}^* G|_{\hat t\times\hat s}\|^2
  &\leq \phantom{\omega_{ts}^2} \sum_{r\in\desc(t)} \epsilon_r^2 &
  &\iff\\
  \|G|_{\hat t\times\hat s} - Q_{tc} Q_{tc}^* G|_{\hat t\times\hat s}\|^2
  &\leq \omega_{ts}^2 \sum_{r\in\desc(t)} \epsilon_r^2 &
  &\text{ for all } b=(t,s)\in\lfaII,
\end{align*}
so we can choose different accuracies for each block, e.g., to
ensure block-relative error bounds by using
$\omega_{ts} = \|G|_{\hat t\times\hat s}\|$.
\end{remark}

In order to obtain an estimate for the complexity of the
compression algorithm, we assume that there is a constant
$\Csvd\in\bbbr_{>0}$ such that the singular value decomposition
of a $n$-by-$m$ matrix can be computed in not more than
\begin{equation}\label{eq:work_svd}
  \Csvd \min\{n^2,m^2\} \max\{n,m\}
  \text{ operations}
\end{equation}
up to machine accuracy (cf. \cite[Section~5.4.5]{GOVL96}).

%
%
\begin{theorem}[Complexity]
\label{th:complexity_compression}
If (\ref{eq:rank_leaves}) and (\ref{eq:work_svd}) hold, the
compression algorithm given in Figure~\ref{fi:dh2basis} requires
not more than
\begin{equation*}
  \Cba k^2 (\#\ctI) (\#\Idx) \text{ operations},
\end{equation*}
where $\Cba := \max\{ \Csvd \Crs^2 + 2 \Crs,
4 \Csvd + 4 \}$.

If also (\ref{eq:sons_bound}) holds, we find that we require not
more than
\begin{equation*}
  \Cba \Clv k (\#\Idx)^2 \text{ operations}.
\end{equation*}
\end{theorem}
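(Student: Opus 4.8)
The plan is to trace the recursive procedure \texttt{basis} in Figure~\ref{fi:dh2basis} and count the operations performed for each cluster $t\in\ctI$ and each direction $c\in\mathcal{D}_t$, then sum over the tree. First I would bound the work at a single leaf cluster: for each $c\in\mathcal{D}_t$ we compute an SVD of $G_{tc}=G|_{\hat t\times\mathcal{F}_{tc}}$, a matrix with $\#\hat t$ rows and $\#\mathcal{F}_{tc}\leq\#\Idx$ columns, then form $R_{tc}=Q_{tc}^*G_{tc}$. By (\ref{eq:rank_leaves}) we have $\#\hat t\leq\Crs k$, so (\ref{eq:work_svd}) gives at most $\Csvd(\Crs k)^2\#\Idx$ operations for the SVD and at most $k(\Crs k)\#\Idx = \Crs k^2\#\Idx$ for the product $R_{tc}$, hence at most $(\Csvd\Crs^2 + \Crs)k^2\#\Idx$ per direction — actually with the factor $2\Crs$ absorbing a second product if needed, matching the first term in the definition of $\Cba$. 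For a non-leaf cluster with $\sons(t)=\{t_1,t_2\}$, the matrix $\widehat G_{tc}$ has only $2k$ rows and at most $\#\Idx$ columns, so its SVD costs at most $\Csvd(2k)^2\#\Idx = 4\Csvd k^2\#\Idx$ operations, recovering $E_{t_1c},E_{t_2c}$ is free, and $R_{tc}=\widehat Q_{tc}^*\widehat G_{tc}$ costs at most $k(2k)\#\Idx = 2k^2\#\Idx$; together with the cost of assembling $\widehat G_{tc}$ by copying submatrices of $R_{t_1c'},R_{t_2c'}$ (at most $2k\#\Idx$, negligible against $k^2\#\Idx$) this is bounded by $(4\Csvd+4)k^2\#\Idx$ per direction, matching the second term in $\Cba$.

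Having the per-$(t,c)$ bound $\Cba k^2\#\Idx$, I would sum over all clusters and directions. The total work is at most $\Cba k^2\#\Idx\sum_{t\in\ctI}\#\mathcal{D}_t$. Since the algorithm only needs to treat directions that are actually used by some admissible block — equivalently, directions lying in some $\descd{t^+}(c_b)$ — the naive bound $\#\mathcal{D}_t\leq\#\brow(t)+\#\bcol(t)$ from the proof of Lemma~\ref{le:storage_basis} could be invoked, but the cleanest route for the \emph{first} assertion is simply the crude bound $\#\mathcal{D}_t\leq\#\ctI$ (since each direction in $\mathcal{D}_t$ that matters is inherited along a path from an ancestor, and distinct such directions are charged to distinct blocks, of which there are at most $O(\#\ctI)$); this yields $\Cba k^2(\#\ctI)(\#\Idx)$ and establishes the first displayed bound. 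It is worth being slightly careful here that the number of \emph{relevant} directions at $t$ is indeed $O(\#\ctI)$ rather than potentially much larger; this follows because for each $c\in\mathcal{D}_t$ appearing in the algorithm there is, by the definition of $F_{tc}$, some block $b=(t^+,s)\in\lfaII$ with $c\in\descd{t^+}(c_b)$, and the son-mapping $\sd{}$ being a function means $c$ is determined by $(t^+,s)$ together with the position of $t$ below $t^+$, so the count is at most $\#\ctII\leq\Csn^2\#\ctI$ up to constants — though to land exactly on $\#\ctI$ one uses that along any fixed root-to-$t$ path the directions inherited from a fixed block coincide, collapsing the count.

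For the second assertion, I would substitute the cluster-count bound (\ref{eq:cluster_bound}) from Lemma~\ref{le:clusters}, which under (\ref{eq:sons_bound}) (together with the hypotheses already assumed) gives $\#\ctI\leq\Clv\#\Idx/k$. Plugging this into $\Cba k^2(\#\ctI)(\#\Idx)$ yields $\Cba k^2(\Clv\#\Idx/k)(\#\Idx)=\Cba\Clv k(\#\Idx)^2$, which is the second displayed bound. The main obstacle is the bookkeeping in the step above: making rigorous that the per-cluster sum over directions is controlled by $\#\ctI$ (and not by the a priori much larger cardinality of $\bigcup_{\ell}\mathcal{D}_\ell$) requires carefully exploiting that the algorithm visits $Q_{tc}$ only for directions reachable via $\descd{}$ from a block direction $c_b$ of an admissible leaf, combined with the uniqueness of the son mappings from Definition~\ref{de:directions}; everything else is a routine accumulation of the SVD and matrix-product costs already tabulated.
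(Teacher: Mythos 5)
Your per-cluster, per-direction cost accounting is essentially the paper's (SVD of an $\mathcal{O}(k)\times\#\mathcal{F}_{tc}$ matrix plus one matrix product, giving the two terms in $\Cba$), but the way you aggregate over directions contains a genuine gap that loses a factor. You replace $\#\mathcal{F}_{tc}$ by $\#\Idx$ \emph{per direction} and then try to control $\sum_{t}\#\mathcal{D}_t$. Even granting your bound $\#\mathcal{D}_t\leq\#\ctI$, this gives a total of $\Cba k^2\,\#\Idx\sum_{t}\#\mathcal{D}_t\leq \Cba k^2(\#\ctI)^2(\#\Idx)$, not the claimed $\Cba k^2(\#\ctI)(\#\Idx)$; the step ``this yields $\Cba k^2(\#\ctI)(\#\Idx)$'' silently drops a factor of $\#\ctI$ (equivalently, it would need $\sum_t\#\mathcal{D}_t\leq\#\ctI$, i.e.\ one direction per cluster on average, which fails precisely in the high-frequency regime where $\#\mathcal{D}_t$ grows like $\kappa^2\diam^2(B_t)$). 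Note also that the theorem's hypotheses do not include the geometric assumptions needed to bound $\#\mathcal{D}_t$ via Lemma~\ref{le:sparsity}, so any route that needs a count of directions per cluster is off track.

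The missing idea is that for a \emph{fixed} cluster $t$ the sets $\mathcal{F}_{tc}$, $c\in\mathcal{D}_t$, are pairwise disjoint subsets of $\Idx$: each $s\in F_{tc}$ arises from a unique admissible ancestor block $b=(t^+,s)$, and the son mappings of Definition~\ref{de:directions} propagate $c_b$ to a \emph{unique} direction at the level of $t$, so each column index is charged to at most one direction. Keeping the per-direction cost in the form $(\Csvd\Crs^2+2\Crs)k^2\#\mathcal{F}_{tc}$ (resp.\ $(4\Csvd+4)k^2\#\mathcal{F}_{tc}$ for non-leaves) and summing over $c\in\mathcal{D}_t$ then gives $\Cba k^2\#\bigcup_{c}\mathcal{F}_{tc}\leq\Cba k^2\#\Idx$ for the \emph{whole} cluster, independently of how many directions it carries. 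Summing this over the $\#\ctI$ clusters yields the first bound, and substituting $\#\ctI\leq\Clv\#\Idx/k$ from (\ref{eq:cluster_bound}) gives the second, exactly as you describe for that final step.
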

\begin{proof}
Let $t\in\ctI$.

If $t$ is a leaf, the algorithm computes the singular value
decomposition of $G_{tc}$.
This matrix has $\#\hat t$ rows and $\#\mathcal{F}_{tc}$ columns,
so (\ref{eq:work_svd}) yields that not more than
\begin{equation*}
  \Csvd (\#\hat t)^2 \#\mathcal{F}_{tc}
  \leq \Csvd \Crs^2 k^2 \#\mathcal{F}_{tc}
  \text{ operations}    
\end{equation*}
are required to find $Q_{tc}$.
The multiplication needed to compute $R_{tc}$ takes not more than
\begin{equation*}
  2 k (\#\hat t) \#\mathcal{F}_{tc}
  \leq 2 \Crs k^2 \#\mathcal{F}_{tc}
  \text{ operations}.
\end{equation*}
Since the sets $\mathcal{F}_{tc}$ for different directions
$c\in\mathcal{D}_t$ are disjoint, we have a total of not more than
\begin{align*}
  \sum_{c\in\mathcal{D}_t}
     (\Csvd \Crs^2 + 2 \Crs) k^2 \#\mathcal{F}_{tc}
  &= (\Csvd \Crs^2 + 2 \Crs) k^2
          \#\bigcup_{c\in\mathcal{D}_t} \mathcal{F}_{tc}\\
  &\leq \Cba k^2 \#\Idx \text{ operations}
\end{align*}
for a leaf cluster.

If $t$ is not a leaf, forming the matrix $\widehat{G}_{tc}$ requires
no arithmetic operations, computing its singular value decomposition
requires not more than
\begin{equation*}
  \Csvd (2k)^2 \#\mathcal{F}_{tc}
  = 4 \Csvd k^2 \#\mathcal{F}_{tc}
  \text{ operations},
\end{equation*}
copying the contents of $\widehat{Q}_{tc}$ into the transfer matrices
again takes no arithmetic operations, and computing $R_{tc}$ requires
not more than
\begin{equation*}
  2 k (2k) \#\mathcal{F}_{tc}
  = 4 k^2 \#\mathcal{F}_{tc}
  \text{ operations}.
\end{equation*}
Since the sets $\mathcal{F}_{tc}$ for different $c\in\mathcal{D}_t$
are disjoint, we can use the same argument as before to conclude that
\begin{equation*}
  (4 \Csvd+4) k^2 \#\Idx
  \leq \Cba k^2 \#\Idx
  \text{ operations}
\end{equation*}
are sufficient for a non-leaf cluster.

Adding up the estimates for all clusters yields the first estimate,
and the estimate (\ref{eq:cluster_bound}) of Lemma~\ref{le:clusters}
yields the second.
\end{proof}

%
%
\section{Numerical experiments}

We first consider the approximation of the single layer matrix
of the Helmholtz integral operator on the unit sphere.
The surface mesh is constructed by taking the double pyramid
$\{ x\in\bbbr^3\ :\ |x_1|+|x_2|+|x_3|=1 \}$, regularly refining
its eight triangular sides, and shifting the vertices of the
resulting mesh to the unit sphere $\{ x\in\bbbr^3\ :\ \|x\|=1 \}$.
The Galerkin stiffness matrix $G\in\bbbc^{\Idx\times\Idx}$ for
piecewise constant basis functions is approximated by
Sauter-Erichsen-Schwab quadrature \cite{ERSA98,SASC11} using
$3$ quadrature points per coordinate direction.

The cluster tree is constructed by standard geometrically regular
subdivision stopping at leaf clusters containing at most $16$
indices.
The directions are constructed by the procedure described in
Section~\ref{se:admissibility} with $\eta_1=20$.
The block tree is constructed using the admissibility conditions
(\ref{eq:adm_parabolic}) and (\ref{eq:adm_standard}) with $\eta_2=5$.

We apply the compression algorithm with the block-relative
error tolerance
\begin{align*}
  \| G|_{\hat t\times\hat s}
     - Q_{tc} Q_{tc}^* G|_{\hat t\times\hat s} \|
  &\leq \frac{\epsilon}{1-2\zeta^2} \| G|_{\hat t\times\hat s} \| &
  &\text{ for all } b=(t,s)\in\lfaII,
\end{align*}
where we choose $\zeta=2/3$ (cf. Remark~\ref{re:error_control})
and $\epsilon=10^{-4}$.

%
%
\begin{table}
  \begin{equation*}
  \begin{array}{rr|rrrrrrr}
    n & \kappa & t_\text{row} & t_\text{col} & t_\text{prj}
               & k_\text{max} & \text{Mem}/n & t_\text{mvm}
               &\frac{\|G-\widetilde G\|_2}{\|G\|_2}\\
    \hline
    2048 & 8 & 0.6 & 0.4 & 0.6 & 19 & 24.2 & 0.04 & 6.4_{-6}\\
    4608 & 12 & 2.4 & 1.8 & 1.2 & 26 & 44.6 & 0.2 & 5.7_{-6}\\
    8192 & 16 & 6.6 & 5.2 & 4.5 & 29 & 61.4 & 0.4 & 7.3_{-6}\\
   18432 & 24 & 26.5 & 22.1 & 15.4 & 34 & 83.2 & 1.3 & 7.3_{-6}\\
   32768 & 32 & 95.3 & 79.5 & 47.0 & 38 & 93.5 & 3.3 & 8.0_{-6}\\
   73728 & 48 & 554.6 & 509.6 & 236.2 & 38 & 97.0 & 6.9 & 9.2_{-6}\\
  131072 & 64 & 1696.6 & 1585.4 & 803.4 & 41 & 102.1 & 10.7 & 7.5_{-6}\\
  294912 & 96 & 6704.7 & 6743.2 & 3902.5 & 46 & 115.8 & 30.1 & 8.2_{-6}
  \end{array}
  \end{equation*}

  \caption{$\mathcal{DH}^2$-matrix compression of the single-layer potential,
           $\eta_1=20$, $\eta_2=5$, $\epsilon=10^{-4}$}
  \label{ta:dh2slp_dblock}
\end{table}

%
%
\begin{table}
  \begin{equation*}
  \begin{array}{rr|rrrrrrr}
    n & \kappa & t_\text{row} & t_\text{col} & t_\text{prj}
               & k_\text{max} & \text{Mem}/n & t_\text{mvm}
               &\frac{\|G-\widetilde G\|_2}{\|G\|_2}\\
    \hline
    2048 & 8 & 0.5 & 0.4 & 0.6 & 22 & 24.9 & 0.04 & 8.8_{-6}\\
    4608 & 12 & 2.3 & 1.9 & 1.3 & 29 & 46.6 & 0.2 & 8.1_{-6}\\
    8192 & 16 & 6.5 & 5.3 & 3.8 & 33 & 65.4 & 0.5 & 1.0_{-5}\\
   18432 & 24 & 29.6 & 23.3 & 17.1 & 38 & 88.5 & 1.6 & 1.3_{-5}\\
   32768 & 32 & 81.9 & 84.5 & 48.9 & 41 & 100.3 & 2.9 & 1.4_{-5}\\
   73728 & 48 & 490.4 & 508.7 & 260.3 & 38 & 102.3 & 7.0 & 1.7_{-5}\\
  131072 & 64 & 1576.6 & 1588.5 & 850.0 & 42 & 107.9 & 15.1 & 1.5_{-5}\\
  294912 & 96 & 6697.4 & 7263.9 & 4233.1 & 46 & 121.4 & 33.8 & 1.9_{-5}
  \end{array}
  \end{equation*}

  \caption{$\mathcal{DH}^2$-matrix compression of the double-layer potential,
           $\eta_1=20$, $\eta_2=5$, $\epsilon=10^{-4}$}
  \label{ta:dh2dlp_dblock}
\end{table}

\paragraph*{SLP matrix.}
The results of our experiment are collected in Table~\ref{ta:dh2slp_dblock}.
Its first column contains the matrix dimension $n$, the second the
wave number $\kappa$.
The wave number has been chosen such that $\kappa h \approx 1.3$, i.e., we
are in the high-frequency regime with only approximately five mesh
elements per wavelength.

The third, fourth and fifth column give the time in seconds required
to construct the directional row basis, the directional column basis,
and the final $\mathcal{DH}^2$-matrix approximation.
The implementation is parallelized based on a decomposition of the
cluster tree into independent subtrees.
The program was allowed to use up to $64$ cores of a SGI~UV2000 shared
memory computer with Intel Xeon E5-4640 processors running at 2.4~GHz.

The sixth column gives the maximal rank $k$ used in the adaptively
constructed bases, and the storage requirements in KiB per degree
of freedom can be found in the seventh column.
The eigth column gives the time in seconds required for a matrix-vector
multiplication by the $\mathcal{DH}^2$-matrix $\widetilde{G}$.
This operation is currently only partially parallelized: the forward
and backward transformation are performed sequentially, while the
coupling and nearfield matrices are handled concurrently with up
to $64$ cores.
The ninth and last column contains the relative spectral error measured
by a power iteration.

We first notice that the error control strategy works even better
than expected: the relative spectral errors are approximately ten times
smaller than the prescribed error tolerance $\epsilon$.

We can also see that the rank appears to grow like $\log(\kappa)$.
This effect can be explained by applying \cite[Lemma~6.37]{BO10}:
the approximation of $\widehat{G}_{tc}$ depends on the number
of clusters in $F_{tc}$, and Lemma~\ref{le:sparsity} suggests
that this number grows like $\kappa^2$.
The compression algorithm chooses a higher rank, corresponding
to a higher expansion order, to compensate this growth.

%
%
\begin{figure}
  \pgfdeclareimage[width=0.45\textwidth]{setup}{fi_setup}
  \pgfdeclareimage[width=0.45\textwidth]{mem}{fi_mem}

  \begin{center}
  \pgfuseimage{setup}%
  \quad%
  \pgfuseimage{mem}
  \end{center}

  \caption{Run-time for the $\mathcal{DH}^2$-compression relative to
           $n^2 k$ (left)
           and storage requirements relative to $n k$ (right)}
  \label{fi:setup_mem}
\end{figure}

Theorem~\ref{th:complexity_compression} predicts that the number
of operations for finding an adaptive directional cluster basis
is bounded by $n^2 k$.
The time divided by $n^2 k$ is displayed on the left in
Figure~\ref{fi:setup_mem}, and we can see that it indeed appears
to be bounded.

Theorem~\ref{th:complexity} predicts that the storage requirements
of the $\mathcal{DH}^2$-matrix approximation are bounded by
$\mathcal{O}(n k^2 \log n)$.
The right-hand side of Figure~\ref{fi:setup_mem} shows the storage
requirements divided by $n k$, and since the curve appears to be
bounded, our complexity estimate may be pessimistic.
A possible explanation could be that the estimate considers only the
\emph{maximal} rank $k$, while the complexity can be expected to depend
on a weighted \emph{average} rank.

\paragraph*{DLP matrix.}
Since the compression algorithm requires only the block structure
and the matrix coefficients, we can apply it to more general matrices
and determine experimentally whether they can be represented
efficiently in the $\mathcal{DH}^2$-matrix format.

We first consider the Helmholtz double layer potential (DLP) operator
given by the kernel function
\begin{equation*}
  g_\text{dlp}(x,y) = \frac{\partial}{\partial n(y)} g(x,y)
  = (1 - i \kappa \|x-y\|) \frac{\exp(i \kappa \|x-y\|)}{4 \pi \|x-y\|^3}
    \langle x-y, n(y) \rangle.
\end{equation*}
We denote the resulting Galerkin matrix by $G_\text{dlp}$.
Since it usually appears in second-kind integral equations, we
approximate $G := \frac{1}{2} M + G_\text{dlp}$, where $M$ denotes the
mass matrix.
The results are given in Table~\ref{ta:dh2dlp_dblock}, and we can
see that the compression algorithm works as well for the double layer
potential as for the single layer potential.
We have included the runtime for the compression and the
storage requirements in Figure~\ref{fi:setup_mem}, and observe that
the curves for SLP and DLP look very similar.

\paragraph*{Comparison with ACA.}
Practical experiments show that the \emph{adaptive cross approximation}
(ACA) method \cite{BE00a} works surprisingly well for the Helmholtz
boundary element method, even in the case of fairly high frequencies.
In a final experiment, we compare the new compression algorithm
to ACA for the single layer potential operator.
Since ACA uses the standard admissibility condition
\begin{equation*}
  \max\{\diam(B_t), \diam(B_s)\} \leq \eta_2 \dist(B_t,B_s)
\end{equation*}
instead of the parabolic condition (\ref{eq:admissibility}), we level
the playing field and use the same condition also for the
$\mathcal{DH}^2$-matrix compression algorithm.

%
%
\begin{table}
  \begin{equation*}
  \begin{array}{rr|rrr|rrr}
      &        & \multicolumn{3}{c|}{\text{ACA}}
               & \multicolumn{3}{c}{\mathcal{DH}^2}\\
    n & \kappa & k_\mathrm{max} & \text{Mem}/n
               & \frac{\|G-\widetilde G\|_2}{\|G\|_2}
               & k_\mathrm{max} & \text{Mem}/n
               & \frac{\|G-\widetilde G\|_2}{\|G\|_2}\\
    \hline
    8192 & 16 & 40 & 30.0 & 8.5_{-5}
              & 49 & 15.7 & 4.1_{-5}\\
   18432 & 24 & 54 & 40.1 & 1.2_{-4}
              & 72 & 18.0 & 4.7_{-5}\\
   32768 & 32 & 71 & 50.0 & 1.9_{-4}
              & 71 & 19.9 & 5.0_{-5}\\
   73728 & 48 & 102 & 67.3 & 2.4_{-4}
              & 104 & 21.9 & 5.3_{-5}\\
  131072 & 64 & 143 & 86.1 & 2.8_{-4}
              & 147 & 23.7 & 5.1_{-5}\\
  \end{array}
  \end{equation*}

  \caption{Adaptive cross approximation compared to $\mathcal{DH}^2$-matrix
           compression for the single-layer potential operator with
           the standard admissibility condition, $\eta_2=5$, $\epsilon=10^{-4}$}
  \label{ta:dh2_vs_aca}
\end{table}

The results are given in Table~\ref{ta:dh2_vs_aca}.
We can see that the $\mathcal{DH}^2$-matrix compression requires
a significantly smaller amount of storage while yielding a higher
accuracy.
The advantage of the $\mathcal{DH}^2$-matrix grows as the problems
increase in size.

\bibliographystyle{plain}
\bibliography{hmatrix}

\end{document}